\documentclass[reqno,11pt]{amsart} 

\usepackage[colorlinks=true,urlcolor=blue,
citecolor=red,linkcolor=blue,linktocpage,pdfpagelabels,
bookmarksnumbered,bookmarksopen]{hyperref}

\usepackage[left=2.6cm,right=2.6cm,top=2.9cm,bottom=2.9cm]{geometry}

\usepackage{amsmath,amssymb,latexsym,soul,cite,mathrsfs}
\numberwithin{equation}{section}  
\usepackage{color,enumitem,graphicx}
\usepackage{enumerate}
\usepackage{cancel}
\usepackage{mathrsfs}

\usepackage[colorlinks=true,urlcolor=blue,
citecolor=red,linkcolor=blue,linktocpage,pdfpagelabels,
bookmarksnumbered,bookmarksopen]{hyperref}
\usepackage[english]{babel}

\newtheorem{theorem}{Theorem}[section]
\newtheorem{proposition}[theorem]{Proposition}
\newtheorem{corollary}[theorem]{Corollary}
\newtheorem{lemma}[theorem]{Lemma}
\DeclareMathAlphabet{\mathpzc}{OT1}{pzc}{m}{it}

\newtheorem{example}[theorem]{Example}

\newtheorem{remark}[theorem]{Remark}
\newcommand{\Ric}{\operatorname{Ric}}
\newtheorem{theoremletter}{Theorem}

\title[Torsional Rigidity and Spherical Deficit for a Dirichlet Problem]{Torsional Rigidity and Spherical Deficit for a Dirichlet Problem on Riemannian Manifolds}

\author[M. Andrade]{Maria Andrade}
\address[M. Andrade]{Universidade Federal de Sergipe - UFS, Departamento de Matemática, 49100-000, S\~ao Cristov\~ao-SE, Brazil.
}
\email{\href{mailto: maria@mat.ufs.br}{ maria@mat.ufs.br}}

\address{Current address: Department of Mathematics, Princeton University, Princeton, NJ, USA, 08544.}
\email{\href{mailto: ma6208@princeton.edu}{ ma6208@princeton.edu}}

\author[A. Freitas]{Allan Freitas}

\address[A. Freitas]{Universidade Federal da Paraíba - UFPB, Departamento de Matemática, João Pessoa, PB 58051-900, Brazil}
\email{\href{mailto: allan@mat.ufpb.br}{allan@mat.ufpb.br}}

\subjclass[2020]{53C20, 53C21, 58J32, 53C24}
\keywords{Torsional Rigidity, Ricci curvature, Overdetermined PDE, Model Manifold, Rigidity}

\begin{document}

\begin{abstract}
In this work, we study several inequalities related to a Dirichlet problem on Riemannian manifolds whose Ricci curvature is bounded from below. First, we establish inequalities involving the torsional rigidity and discuss rigidity results characterizing metric balls in this setting. Next, we derive an integral identity associated with a Dirichlet problem, which measures the spherical deficit arising in this context. In particular, we apply this identity to the setting of Einstein manifolds.
\end{abstract}

\maketitle




\section{Introduction}\label{sec:intro}
Let $\Omega$ be an bounded domain in the Euclidean space $\mathbb{R}^n$ with non-empty boundary $\partial\Omega$, and let the \textit{torsion function} $u : \Omega \to \mathbb{R}$ be the unique solution of
\begin{equation}\label{Dirichlet1}
\Delta u = -1\quad \text{in } \Omega, \quad u = 0 \text{ on } \partial \Omega.    
\end{equation}
In this setting, the \textit{torsional rigidity} is the functional defined by
\begin{equation}
T(\Omega) = \int_{\Omega} u \, dv.  
\end{equation}
The importance, applications, and meaning of both the torsion function and the torsional rigidity can be seen in many areas of mathematics, such as elasticity theory (see \cite{polya1951,sokolniko1956}), fluid dynamics (see \cite{serrin1971}), heat conduction (see \cite{vandenberg1990}), and also in the study of minimal submanifolds \cite{markvorsen2006}.
Just to mention an example, in the $n=2$ case, $T(\Omega)$ represents the torque required for a unit angle of twist per unit
length when twisting an elastic beam of uniform cross section $\Omega$ (see \cite[Pages~109-119]{sokolniko1956}).

In the case where an additional boundary condition is imposed together with \eqref{Dirichlet1}, namely
\begin{equation}\label{neumann}
u_{\nu} = -c,
\end{equation}
where $c$ is a positive constant, we are faced with a so-called overdetermined problem. This type of problem was first studied by J. Serrin in \cite{serrin1971}, who proved that the only solution of \eqref{Dirichlet1}--\eqref{neumann} is an Euclidean ball, and consequently the torsion function $u$ is radial. In an interesting application to elasticity theory, Serrin's celebrated result can be paraphrased as follows \cite{serrin1971}: \textit{``when a solid straight bar is subject to torsion, the magnitude of the resulting traction which occurs at the surface of the bar is independent of position if and only if the bar has a circular cross section.''}

In both the Euclidean setting, which has been primarily studied, and in the context of domains in \textit{Riemannian manifolds}, the analysis of problems of the type \eqref{Dirichlet1} has attracted considerable attention. A particularly interesting and recently explored direction is to establish bounds for the torsional rigidity $T(\Omega)$. In this regard, Wang and Xia (see \cite{wang2020}) have recently obtained sharp estimates for the torsional rigidity where $\Omega=(M^n,g)$ is a compact Riemannian manifold with boundary. In one of their results they proved 
\begin{theorem}[Theorem 1.7 in \cite{wang2020}]
Let $(M^n,g)$ be an $n$-dimensional compact Riemannian manifold with boundary and Ricci curvature $Ric\geq(n-1)k, \ k\in\mathbb{R}$. Let $u$ be the solution of the Dirichlet problem \eqref{Dirichlet1}.
Then
\begin{equation}\label{bound}
\max_{\partial M} |\nabla u|^2 
\geq 
\frac{(n+2)T(M)}{nVol(M)}
+ \frac{2(n-1)k}{Vol(M)} \int_M u\,|\nabla u|^2dv,
\end{equation}
with equality holding if and only if $\kappa = 0$ and $M$ is isometric to a ball in $\mathbb{R}^n$.
\end{theorem}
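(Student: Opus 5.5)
We argue with the $P$-function $P=|\nabla u|^2+\frac{2}{n}u$ and integrate it against $u$. Bochner's formula gives
\[
\tfrac12\Delta|\nabla u|^2=|\nabla^2u|^2+\langle\nabla u,\nabla\Delta u\rangle+\Ric(\nabla u,\nabla u).
\]
Since $\Delta u=-1$ is constant, the middle term vanishes. The Cauchy--Schwarz inequality $|\nabla^2 u|^2\ge (\Delta u)^2/n=1/n$ and the curvature hypothesis then yield
\[
\tfrac12\Delta|\nabla u|^2\ge \tfrac1n+(n-1)k|\nabla u|^2,
\]
and hence
\[
\Delta P\ge 2(n-1)k|\nabla u|^2 .
\]
Equality here forces $\nabla^2u=-\frac1n g$.

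Multiply this inequality by $u\ge0$ (by the maximum principle) and integrate. Green's second identity, together with $u=0$ on $\partial M$ and $\Delta u=-1$, gives
\[
\int_M u\Delta P\,dv=\int_M P\Delta u\,dv+\int_{\partial M}(uP_\nu-Pu_\nu)\,d\sigma=-\int_M P\,dv-\int_{\partial M}|\nabla u|^2u_\nu\,d\sigma.
\]
On the boundary $\nabla u=u_\nu\nu$ and $u_\nu\le0$, so $-|\nabla u|^2u_\nu=|u_\nu|^3$. Therefore
\[
\int_{\partial M}|u_\nu|^3\,d\sigma\ \ge\ \int_M|\nabla u|^2\,dv+\frac2n\int_M u\,dv+2(n-1)k\int_M u|\nabla u|^2\,dv .
\]
Integrating by parts, $\int_M|\nabla u|^2\,dv=\int_M u\,dv=T(M)$, so the first two terms on the right combine to $\frac{n+2}{n}T(M)$.

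To reach \eqref{bound}, bound the left-hand side by
\[
\int_{\partial M}|u_\nu|^3\,d\sigma\le \max_{\partial M}|\nabla u|^2\int_{\partial M}|u_\nu|\,d\sigma ,
\]
and use the divergence theorem, $\int_{\partial M}|u_\nu|\,d\sigma=-\int_{\partial M}u_\nu\,d\sigma=-\int_M\Delta u\,dv=\mathrm{Vol}(M)$. Dividing by $\mathrm{Vol}(M)$ gives exactly \eqref{bound}. The only real care needed is the sign bookkeeping in the boundary terms.

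\textbf{Rigidity.} This is the main obstacle. Equality forces two things: $|\nabla u|$ is constant on $\partial M$, and $\nabla^2u=-\frac1n g$ wherever $u>0$, i.e.\ in the interior. Equality in the Ricci step also requires $\Ric(\nabla u,\nabla u)=(n-1)k|\nabla u|^2$.

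To show $k=0$, take the trace of the Ricci identity for $\nabla^2u=-\frac1n g$. Since $\nabla\Delta u=0$ and the Hessian is parallel, it gives $\Ric(\nabla u,\cdot)=0$. Combined with the equality condition, this means $(n-1)k|\nabla u|^2\equiv0$. Because $u$ is not constant, $\nabla u\neq0$ somewhere, so $k=0$ (for $n\ge2$).

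Finally, $\nabla^2u=-\frac1n g$ is an Obata/Reilly-type equation. The function $u$ has a unique interior maximum point $p$, where $\nabla u(p)=0$. Integrating the Hessian equation along geodesics from $p$ gives
\[
u=\max u-\frac{r^2}{2n},\qquad r=d(p,\cdot),
\]
so $|\nabla u|^2=r^2/n^2$. Constancy of $|\nabla u|$ on $\partial M$ together with $u=0$ there shows that $\partial M$ is a geodesic sphere $\{r=R\}$. Moreover, $\nabla^2 r^2=2g$ forces the metric in normal coordinates to be Euclidean, so $M$ is isometric to a Euclidean ball. The converse is a direct computation with $u=(R^2-|x|^2)/(2n)$.
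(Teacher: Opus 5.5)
Your proposal is correct and follows essentially the route the paper takes for its generalization, Theorem~\ref{thm:gradient_estimate}: Green's identity for $u$ and the $P$-function, the bound $\int_{\partial M}|u_\nu|^3\,d\sigma\le\max_{\partial M}|\nabla u|^2\int_{\partial M}|u_\nu|\,d\sigma$, and the divergence theorem. The paper does not prove this particular statement (it only quotes it from Wang--Xia), and it delegates the Obata-type step to \cite[Lemma 6]{farina2022} where you sketch it yourself; your derivation of $k=0$ from the traced Ricci identity $\operatorname{Ric}(\nabla u,\cdot)=0$ is a clean way to handle the part of the equality case that is specific to $\Delta u=-1$.
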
    
In the Euclidean case, and in the presence of the boundary condition \eqref{neumann}, the previous result provides an alternative proof of Serrin's classical theorem. Indeed, in this particular case, equality in \eqref{bound} follows from the classical Pohozaev identity \cite{pohozaev1965} (see also \cite[p.~171]{struwe}), and the corresponding rigidity result then follows (see \cite{wang2020} for details). 

In the context of Riemannian manifolds, particularly those whose Ricci curvature satisfies $\mathrm{Ric} \geq (n-1)k g, \ k\in\mathbb{R},$ the following Dirichlet problem has attracted considerable attention in recent years:
\begin{equation}\label{Dirichlet2}
\Delta u + n k u = -1 \quad \text{in } \Omega, 
\qquad 
u = 0 \quad \text{on } \partial \Omega.
\end{equation}
This problem is typically studied in conjunction with the overdetermined condition \eqref{neumann}. In the case where $k$ corresponds to the sectional curvature of an $n$-dimensional space form, Ciraolo and Vezzoni \cite{ciraolo} extended Weinberger's method and established the corresponding rigidity result in $\mathbb{S}^{n}_{+}$ (when $k=1$) and $\mathbb{H}^{n}$ (when $k=-1$) by means of a suitable $P$-function. Subsequently, under the same lower bound on the Ricci curvature, this rigidity phenomenon has also been observed in certain classes of warped products (see \cite{Roncoroni2018, farina2022}) and in manifolds endowed with a conformal vector field (see \cite{freitas2024, andrade2025}).

The first goal of this paper is to establish new estimates for the torsional rigidity associated with problem \eqref{Dirichlet2}. To this end, we employ the classical $P$-function in this setting (see Section~\ref{sec_2}), extending the approach developed in \cite{wang2020} and obtaining new rigidity characterizations of metric balls in Riemannian manifolds. Although we obtain some results applicable to Riemannian manifolds with Ricci curvature bounded below (see Theorems \ref{thm:main} and \ref{thm:gradient_estimate}), we highlight one application in the context of so-called model manifolds.

Recall that a Riemannian manifold $(M_{\rho}^{n},g_{M_{\rho}^{n}})$ is called a \emph{model manifold} (or rotationally symmetric) if
\[
M_{\rho}^{n} \doteq \frac{[0,r)\times\mathbb{S}^{n-1}}{\sim}\qquad\text{and}\qquad 
g_{M_{\rho}^{n}} = dt\otimes dt + \rho^{2}(t)\,g_{\mathbb{S}^{n-1}},
\]
where $r\in(0,+\infty]$, $\sim$ is the equivalence relation identifying all points in $\{0\}\times\mathbb{S}^{n-1}$, and $\rho:[0,r)\to[0,+\infty)$ is a smooth function satisfying
\[
\rho(t)>0\ \ \forall t>0,\qquad \rho^{(2k)}(0)=0\ \ \forall k=0,1,2,\dots,\qquad \rho'(0)=1.
\]
The unique point corresponding to $t=0$ is called the \emph{pole} of the model and is denoted by $o\in M$, while $\rho$ is called the \emph{warping function} (see \cite[Section 1.4.4]{petersen}). Classical examples include the space forms: the Euclidean space $\mathbb{R}^n$, obtained by taking $\rho(t)=t$ and $r=\infty$; the hyperbolic space $\mathbb{H}^n$, obtained by taking $\rho(t)=\sinh(t)$ and $r=\infty$; and the upper hemisphere $\mathbb{S}_{+}^n$, obtained by taking $\rho(t)=\sin(t)$ and $r=\frac{\pi}{2}$. These three models serve as the prototypes for the following result, which is parallel to \cite[Theorem 1.7]{wang2020}:

\begin{theoremletter}\label{MainTh_1}
Let $(M_{\rho}^{n},g_{M_{\rho}^{n}})$ be a model manifold whose Ricci curvature satisfies $\operatorname{Ric} \geq (n-1)k g, \ k\in\mathbb{R}$ and let $\Omega \subset M_{\rho}^{n}$ be a bounded domain for which there exists a solution $u$ of \eqref{Dirichlet2}. Then
\begin{equation}\label{eqtor}
\max_{\partial\Omega}|\nabla u|^2 \;\geq\; \frac{1}{\operatorname{Vol}(\Omega) + nk\,T(\Omega)}\;\cdot\;\frac{n+2}{n}\left[ T(\Omega) + kn\int_{\Omega}\bigl(u^2 + u|\nabla u|^2\bigr)dv \right].
\end{equation}
Moreover, equality holds in \eqref{eqtor} if and only if $u$ is a radial function and $\Omega$ is a geodesic ball in a space form.
\end{theoremletter}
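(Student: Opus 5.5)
The plan is to adapt the $P$-function argument of \cite{wang2020} to problem \eqref{Dirichlet2}. I would take
\[
P=|\nabla u|^2+\frac{2}{n}u+k u^2 .
\]
The first step is to check that $P$ is subharmonic when $\operatorname{Ric}\ge (n-1)k g$. By Bochner's formula together with $|\nabla^2u|^2\ge (\Delta u)^2/n$ and $\nabla\Delta u=-nk\nabla u$,
\[
\tfrac12\Delta|\nabla u|^2\ \ge\ \frac{(\Delta u)^2}{n}-nk|\nabla u|^2+(n-1)k|\nabla u|^2 .
\]
Adding $\tfrac12\Delta(\tfrac2n u)=\tfrac1n\Delta u$ and $\tfrac12\Delta(ku^2)=ku\Delta u+k|\nabla u|^2$ gives
\[
\tfrac12\Delta P\ \ge\ \frac{\Delta u}{n}\bigl(\Delta u+1+nku\bigr)=0 .
\]
Equality holds exactly when $\nabla^2u=\frac{\Delta u}{n}g$ and $\operatorname{Ric}(\nabla u,\nabla u)=(n-1)k|\nabla u|^2$.

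The second step is to integrate against $u$ instead of using only the maximum principle; this is where the weight $\operatorname{Vol}(\Omega)+nkT(\Omega)$ comes from. I would assume $u>0$ in $\Omega$, which I would justify via the maximum principle for \eqref{Dirichlet2}, or take it as part of the existence hypothesis. On $\partial\Omega$ we have $u=0$, $u_\nu=-|\nabla u|$ and $P=|\nabla u|^2$, so Green's formula gives
\[
0\le\int_\Omega u\,\Delta P=\int_\Omega P\,\Delta u+\int_{\partial\Omega}|\nabla u|^3 .
\]
Since $\int_{\partial\Omega}|\nabla u|=-\int_\Omega\Delta u=\operatorname{Vol}(\Omega)+nkT(\Omega)$, this yields
\[
\int_\Omega P\,(1+nku)\le \max_{\partial\Omega}|\nabla u|^2\,\bigl(\operatorname{Vol}(\Omega)+nkT(\Omega)\bigr).
\]

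The third step is pure bookkeeping: express the left side through $T$, $\int u^2$ and $\int u|\nabla u|^2$. Two integration-by-parts identities are needed:
\[
\int|\nabla u|^2=\int u(1+nku)=T+nk\int u^2,
\qquad
\int u^2(1+nku)=2\int u|\nabla u|^2 .
\]
The second one comes from $\int u^2\Delta u=-2\int u|\nabla u|^2$. These also eliminate $\int u^3$. Expanding $\int(|\nabla u|^2+\tfrac2n u+ku^2)(1+nku)$ still leaves the cross term $nk\int u|\nabla u|^2$. After using the identities above, the left side should combine to $\frac{n+2}{n}\bigl[T+kn\int(u^2+u|\nabla u|^2)\bigr]$. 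If the coefficients do not match exactly, I would add a Pohozaev-type identity from integrating $\operatorname{div}$ of a suitable vector field built from $\nabla u$ and $u$. I expect this algebraic matching to be routine but error-prone.

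The main obstacle is the equality case, and this is where the model-manifold hypothesis should enter. Equality forces $\Delta P\equiv0$, hence $\nabla^2u=-\frac{1+nku}{n}g$ on $\Omega$. It also forces $|\nabla u|$ to be constant on $\partial\Omega$. A nonconstant function with umbilic Hessian splits the metric locally as $dt^2+\phi'(t)^2 g_0$ along its gradient flow, with $u=\phi(t)$. Comparing this with the warped structure $dt^2+\rho^2g_{\mathbb S^{n-1}}$ of $M_\rho^n$, I would argue that the critical point of $u$ (its maximum) must be the pole $o$, that $u$ is a function of $t$ alone, and that the ODE $u''=-\frac{1+nku}{n}$ together with $\rho''/\rho$-type constraints forces $\rho$ to be $t$, $\sin t$ or $\sinh t$ after rescaling, i.e.\ constant curvature $k$ on $\Omega$. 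Then $\Omega$ is a sublevel set of a radial function, hence a geodesic ball in a space form. For the converse, on such a ball the explicit radial solution satisfies all the equalities above, since the Hessian is umbilic, the Ricci identity is exact and $|\nabla u|$ is constant on $\partial\Omega$. Pinning down this identification of the Hessian-splitting with the model's warped structure is the step I would work out most carefully.
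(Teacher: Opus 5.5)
Your proof of the inequality is exactly the paper's proof of Theorem~\ref{thm:gradient_estimate}. It uses the same $P$, the same Green identity tested against $u$, and the same bound $\int_{\partial\Omega}|\nabla u|^3\le m^2\int_{\partial\Omega}|\nabla u|=m^2(\operatorname{Vol}(\Omega)+nkT(\Omega))$. The two integration-by-parts identities you list already make the coefficients match, so no Pohozaev identity is needed.

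The gap is in the equality case. The step that places the critical point of $u$ at the pole $o$, so that $u$ becomes a function of the model coordinate $t$, is false. Take $M^n_\rho=\mathbb{R}^n$ ($\rho(t)=t$), $k=0$, $\Omega=B(p,R)$ with $p\neq o$, and $u=(R^2-|x-p|^2)/(2n)$. Then $P\equiv R^2/n^2$, $|\nabla u|\equiv R/n$ on $\partial\Omega$, equality holds in \eqref{eqtor}, and $u$ peaks at $p$, not at $o$. So you cannot match the level sets of $u$ with the spheres $\{t=\mathrm{const}\}$ and read off $\rho''/\rho=-k$. The paper's reduction to Corollary~\ref{cor:model} tacitly makes the same identification when it writes $\operatorname{Ric}(\nabla u,\nabla u)=-(n-1)\frac{\rho''}{\rho}|\nabla u|^2$, which requires $\nabla u\parallel\partial_t$.

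The repair uses only the splitting you already have and never touches $\rho$.
\begin{itemize}
\item By the Obata-type lemma, $\Omega=B(p,R)$ and $u=\phi(s)$ with $s=d(p,\cdot)$.
\item The Hessian equation makes the geodesic spheres about $p$ umbilic with shape operator $(\phi''/\phi')\,\mathrm{Id}$. Hence $g=ds^2+(\phi'(s)/\phi''(0))^2g_{\mathbb{S}^{n-1}}$ on $\Omega$.
\item Differentiating $\phi''=-(\frac1n+k\phi)$ gives $(\phi')''+k\phi'=0$ with $\phi'(0)=0$. Also $\phi''(0)\neq0$, since otherwise $u$ would be constant.
\item Hence $\phi'=\phi''(0)\,\mathrm{sn}_k$, where $\mathrm{sn}_k(s)$ equals $\sin(\sqrt{k}s)/\sqrt{k}$, $s$, or $\sinh(\sqrt{-k}s)/\sqrt{-k}$ for $k>0$, $k=0$, $k<0$ respectively.
\end{itemize}
Thus $g=ds^2+\mathrm{sn}_k(s)^2g_{\mathbb{S}^{n-1}}$ on $\Omega$. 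So $\Omega$ is isometric to a geodesic ball in the space form of curvature $k$ wherever $p$ lies, and the Ricci equality comes for free.

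Your converse direction is fine. So is your decision to take $u>0$ as a hypothesis: for $k>0$ it does not follow from the maximum principle, and the paper also uses it tacitly when discarding $\int_\Omega u\,\Delta P$.
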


The second part of this paper is devoted to deriving an integral identity in order to analyze the spherical deficit associated with the solutions of \eqref{Dirichlet2}. The motivation for this goal comes from \cite{poggesi,poggesi1}, where the authors established new integral identities yielding quantitative estimates for radially symmetric configurations associated with the problem \eqref{Dirichlet1}. In this direction, for both Alexandrov's Soap Bubble Theorem and Serrin's overdetermined problem, the quantity
\[
|\mathring{\nabla}^{2}u|^{2}
=
|\nabla^{2}u|^{2}
-
\frac{(\Delta u)^{2}}{n}
\]
plays a fundamental role in measuring how close a domain is to being spherical, and for this reason we refer to it as the \textit{spherical (or Cauchy--Schwarz) deficit}. Indeed, the condition
\[
\mathring{\nabla}^{2}u=0
\]
leads to rigidity phenomena both in the Euclidean and in the Riemannian settings. In the Euclidean case, it characterizes quadratic polynomials, while in the Riemannian setting, Obata-type theorems identify metric balls as the only possible solutions (see, for example, \cite{obata, reilly} and \cite[Lemma 6]{farina2022}). In the next result, $q$ is a suitable quadratic polynomial solution for \eqref{Dirichlet1} and
$$c=\frac{\operatorname{Vol}(\Omega)}{\operatorname{Area}(\Omega)}.$$

\begin{theorem}[Theorem 2.1 in \cite{poggesi}]
Let $\Omega \subset \mathbb{R}^{n}$ be a bounded domain with boundary $\Gamma$ of class $C^{1,\alpha}$, with $0<\alpha\leq 1$, and let $c$ a positive constant. Then the solution $u$ of \eqref{Dirichlet1} satisfies the identity
\begin{equation}\label{eq:serrin_identity}
-2\int_{\Omega} u|\mathring{\nabla}^{2}u|^2dv
=\int_{\partial\Omega}(u_{\nu}^{2}-c^{2})(u_{\nu}-q_{\nu})d\sigma.
\end{equation}
In particular, if the right-hand side of \eqref{eq:serrin_identity} is non-positive, then $\partial\Omega$ must be a sphere (and hence $\Omega$ a ball) of radius $R=nc$. The same conclusion clearly holds whenever $u_{\nu}$ is constant on~$\partial\Omega$.
\end{theorem}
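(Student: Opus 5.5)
The plan is to reduce the Cauchy--Schwarz deficit to the Hessian of a harmonic function and then integrate a weighted Bochner identity against the torsion function $u$.

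\textbf{Reduction to a harmonic function.} For a point $z\in\mathbb{R}^n$ and a constant $a$ to be fixed later, set
\[
q(x)=\frac{a-|x-z|^2}{2n},\qquad h=q-u .
\]
Then $\nabla^2 q=-\tfrac1n I$, so $\Delta q=-1$ and $h$ is harmonic. Moreover $\nabla^2 h=-\bigl(\nabla^2u-\tfrac{\Delta u}{n}I\bigr)=-\mathring{\nabla}^2u$, hence $|\mathring{\nabla}^2u|^2=|\nabla^2h|^2$.

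\textbf{Weighted Bochner identity.} Since $h$ is harmonic, Bochner gives $|\nabla^2h|^2=\tfrac12\Delta|\nabla h|^2$. I multiply by $u$ and apply Green's second identity. Using $u=0$ on $\partial\Omega$ and $\Delta u=-1$, this gives
\[
2\int_\Omega u|\mathring{\nabla}^2u|^2\,dv=-\int_\Omega|\nabla h|^2\,dv-\int_{\partial\Omega}|\nabla h|^2u_\nu\,d\sigma .
\]
For the first term on the right I use $\int_\Omega|\nabla h|^2=\int_{\partial\Omega}h\,h_\nu=\int_{\partial\Omega}q\,(q_\nu-u_\nu)$, since $h=q$ on $\partial\Omega$. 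For the second term, $\nabla u=u_\nu\nu$ on $\partial\Omega$, so $|\nabla h|^2=|\nabla q|^2-2u_\nu q_\nu+u_\nu^2$ there.

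\textbf{Boundary integral identities.} Everything is now a boundary integral in $u_\nu$, $q$, $q_\nu$ and $|\nabla q|^2=|x-z|^2/n^2$. I will simplify it with four identities:
\begin{enumerate}[label=(\roman*)]
\item $\int_{\partial\Omega}u_\nu=-\operatorname{Vol}(\Omega)=-c\operatorname{Area}(\partial\Omega)$, by the divergence theorem;
\item $\int_{\partial\Omega}q_\nu=-\operatorname{Vol}(\Omega)$, likewise;
\item $\int_\Omega q-T(\Omega)=-\int_{\partial\Omega}q\,u_\nu$, from Green's formula applied to $u$ and $q$;
\item the Rellich--Pohozaev identity with center $z$, which after noting $q_\nu=-(x-z)\cdot\nu/n$ reads $\int_{\partial\Omega}u_\nu^2q_\nu=-\tfrac{n+2}{n}T(\Omega)$.
\end{enumerate}
The freedom in $z$ and $a$ is what makes the final cancellation possible. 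I would fix $z$ and $a$ by the normalization used in \cite{poggesi}, which makes the $q$-dependent leftover terms, such as $\int_{\partial\Omega}q$ and the terms $c^2\int_{\partial\Omega}q_\nu$ and $c^2\int_{\partial\Omega}u_\nu$, combine into $\int_{\partial\Omega}(u_\nu^2-c^2)(u_\nu-q_\nu)$. I would first try choosing $z$ and $a$ so that $q$ is constant on average against $u_\nu$ on $\partial\Omega$.

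\textbf{Main obstacle.} The step I expect to be hardest is this algebraic bookkeeping: combining (i)--(iv) so that exactly $-2\int_\Omega u|\mathring\nabla^2u|^2=\int_{\partial\Omega}(u_\nu^2-c^2)(u_\nu-q_\nu)$ emerges. In particular I need to pin down the right choice of $a$ and $z$ that eliminates the remaining $q$ terms.

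\textbf{Rigidity.} Suppose the right-hand side is non-positive. Since $u>0$ in $\Omega$ by the maximum principle, the identity forces $\mathring\nabla^2u\equiv0$, so $\nabla^2u=-\tfrac1nI$. Then $u$ is a quadratic $(b-|x-x_0|^2)/(2n)$, and $\partial\Omega=\{u=0\}$ is a sphere. The relation $|u_\nu|=R/n$ on the sphere, together with the average value $c$ from (i), gives $R=nc$. Finally, if $u_\nu$ is constant, (i) forces $u_\nu\equiv-c$, so $u_\nu^2-c^2\equiv0$ and the right-hand side vanishes.
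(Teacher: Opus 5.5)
\textbf{The identity itself is not yet proved.} Your setup is correct. The function $h=q-u$ is harmonic with $\nabla^2h=-\mathring{\nabla}^2u$. The weighted Bochner--Green step gives $-2\int_\Omega u|\mathring{\nabla}^2u|^2\,dv=\int_\Omega|\nabla h|^2\,dv+\int_{\partial\Omega}|\nabla h|^2u_\nu\,d\sigma$, and (i)--(iv) are right.

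However, you stop exactly at the step that \emph{is} the identity, and you misdiagnose it: no special $z$ or $a$ is needed, and no choice could matter. The left-hand side involves neither, and $q_\nu$ does not involve $a$. Replacing $z$ by $z+w$ changes the right-hand side by $-\frac{1}{n}\int_{\partial\Omega}(u_\nu^2-c^2)\,w\cdot\nu\,d\sigma=0$. This vanishes because $\int_{\partial\Omega}\nu=0$ and $\int_{\partial\Omega}u_\nu^2\nu=0$; the latter follows by integrating $\operatorname{div}(|\nabla u|^2e_i-2u_i\nabla u)=2u_i$.

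The missing ingredients are two elementary facts not on your list: $|\nabla q|^2=(a-2nq)/n^2$ and $\int_{\partial\Omega}qq_\nu=\int_\Omega(|\nabla q|^2-q)$. Together with (i), (iii) and (iv) they give, for every $z$ and $a$,
\[
\int_{\partial\Omega}|\nabla q|^2u_\nu\,d\sigma+\int_{\partial\Omega}q(q_\nu-u_\nu)\,d\sigma
=-\frac{n+2}{n}\Big(\int_{\partial\Omega}qu_\nu\,d\sigma+\int_\Omega q\,dv\Big)
=-\frac{n+2}{n}T(\Omega)=\int_{\partial\Omega}u_\nu^2q_\nu\,d\sigma .
\]
Using $|\nabla h|^2=|\nabla q|^2-2u_\nu q_\nu+u_\nu^2$, your right-hand side then collapses to $\int_{\partial\Omega}u_\nu^2(u_\nu-q_\nu)\,d\sigma$. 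Finally, $\int_{\partial\Omega}(u_\nu-q_\nu)=\int_\Omega\Delta(u-q)=0$ lets you subtract $c^2\int_{\partial\Omega}(u_\nu-q_\nu)$ for any constant $c$. So your route closes, but only after this computation.

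The paper obtains the identity as the case $V\equiv1$, $k=0$, $\varphi=q$ of Proposition~\ref{intlemma} and Theorem~\ref{MainTh_2}. It works with $P=|\nabla u|^2+\frac{2}{n}u$ instead of $h$. Since $\Delta P=2|\mathring{\nabla}^2u|^2$, Green's formula against $u$ gives at once $-\int_\Omega u\Delta P=\int_{\partial\Omega}u_\nu^3+\int_\Omega P=\int_{\partial\Omega}u_\nu^3+\frac{n+2}{n}T(\Omega)$. The integrations by parts against $\varphi$ there amount to proving (iv). No $q$-bookkeeping arises; it is the passage through $h$ that creates your extra terms.

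\textbf{The rigidity step is invalid as written.} Since $u>0$, the left-hand side $-2\int_\Omega u|\mathring{\nabla}^2u|^2$ is $\le0$ for \emph{every} domain. So non-positivity of the right-hand side is automatic and forces nothing; the implication ``right-hand side $\le0\Rightarrow\mathring{\nabla}^2u\equiv0$'' is false.

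The word ``non-positive'' is inherited from the normalization of \cite{poggesi}, where $\Delta u=N$, $u<0$, and the left-hand side $\int_\Omega(-u)(\cdots)$ is $\ge0$. In the present normalization the correct hypothesis is that the right-hand side is non-negative, which forces both sides to vanish. From that point your argument is correct: $\mathring{\nabla}^2u\equiv0$, $u$ is quadratic, $\partial\Omega$ is a sphere, $R=nc$ via (i), and the constant-$u_\nu$ case follows since there the right-hand side is $0$.
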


The main feature of this type of identity is that a boundary condition, namely, the constancy of the normal derivative of $u$, yields rigidity of the domain $\Omega$. Our second main result is devoted to establishing an integral identity in this direction for general Riemannian manifolds endowed with a conformal vector field (see Section~\ref{sec_2} for further details). Although we are able to derive a general integral identity for solutions of \eqref{Dirichlet2}, as stated in Proposition~\ref{intlemma}, which allows us to analyze the role played by the Ricci and scalar curvatures in these identities, we emphasize here its particularly intriguing application to the setting of Einstein manifolds, which we describe below. In this special case, the existence of solutions is guaranteed, for example, by \cite[Theorem 2.3]{foga} (see Remark \ref{existence} for details). In what follows, \(\varphi\) denotes a fundamental solution of the equation
\[
\mathring{\nabla}^{2}\varphi=0.
\]

\begin{theoremletter}\label{MainTh_2}
Let $(M^n,g)$ be an Einstein Riemannian manifold with $\Ric=(n-1)kg, \ k\in\mathbb{R}$, endowed with a conformal vector field $X$ with conformal factor $V$, and let $\varphi$ be a fundamental solution. If $u$ is a solution of \eqref{Dirichlet2} and $c$ is an arbitrary constant, then
\begin{align*}
-2\int_{\Omega}Vu\,|\mathring{\nabla}^{2}u|^{2}dv
&=
\int_{\partial\Omega}(u_{\nu}^{2}-c^{2})
\bigl[V(u-\varphi)_{\nu}-(u-\varphi)V_{\nu}\bigr]d\sigma. \\
\end{align*}
    
\end{theoremletter}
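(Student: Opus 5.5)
The plan is to use the Weinberger-type $P$-function together with Green's second identity weighted by the conformal factor $V$, as in \cite{poggesi} but with the flat weight replaced by $V$. Throughout we use that $X$ is conformal, $\mathcal{L}_X g=2Vg$. We also use the classical consequence of $\Ric=(n-1)kg$ for such fields (contract the commutation formula for $\nabla^2 X$ and use the Einstein condition): $\nabla^2 V=-kV\,g$, hence $\Delta V=-nkV$. For the fundamental solution $\varphi$ we take a function with $\mathring{\nabla}^2\varphi=0$ and $\Delta\varphi+nk\varphi=-1$; in the model cases it is built from $V$ or the radial solution. Then $h:=u-\varphi$ satisfies $\Delta h+nkh=0$ and $\mathring{\nabla}^2h=\mathring{\nabla}^2u$.

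\textbf{Step 1 (the free constant $c$).} Green's identity for the pair $V,h$ gives
\[
\int_{\partial\Omega}\bigl[Vh_\nu-hV_\nu\bigr]d\sigma=\int_\Omega (V\Delta h-h\Delta V)\,dv=\int_\Omega(-nkVh+nkVh)\,dv=0 .
\]
So the term $-c^2[V(u-\varphi)_\nu-(u-\varphi)V_\nu]$ integrates to zero and $c$ may be arbitrary. It therefore suffices to prove the identity with $c=0$:
\[
-2\int_\Omega Vu|\mathring{\nabla}^2u|^2\,dv=\int_{\partial\Omega}u_\nu^2\bigl[Vh_\nu-hV_\nu\bigr]d\sigma .
\]

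\textbf{Step 2 (the $P$-function).} Set $P=|\nabla u|^2+\frac{2}{n}u+ku^2$. Bochner's formula with $\Ric=(n-1)kg$ and $\Delta u=-1-nku$ gives
\[
\tfrac12\Delta|\nabla u|^2=|\nabla^2u|^2-k|\nabla u|^2,
\]
and a direct computation yields $\frac12\Delta P=|\mathring{\nabla}^2u|^2$ (the $ku^2$ term is exactly what cancels the curvature and trace contributions). Moreover, on $\partial\Omega$ we have $u=0$ and $\nabla u=u_\nu\nu$, so $P=u_\nu^2$ there.

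\textbf{Step 3 (the weighted divergence identity).} I would look for a vector field $W$ with
\[
\operatorname{div}W=-2Vu|\mathring{\nabla}^2u|^2
\qquad\text{and}\qquad
W\cdot\nu=P\,(Vh_\nu-hV_\nu)\ \text{on }\partial\Omega .
\]
The natural candidate is
\[
W=P\,(V\nabla h-h\nabla V)-Vu\,\nabla P+\text{(correction)}.
\]
Its divergence splits into three parts:
\begin{itemize}
\item $P(V\Delta h-h\Delta V)=0$ by Step 1;
\item $\langle\nabla P,V\nabla h-h\nabla V\rangle-\langle\nabla(Vu),\nabla P\rangle-Vu\,\Delta P$;
\item the divergence of the correction.
\end{itemize}
Since $u=0$ on $\partial\Omega$, the term $-Vu\nabla P$ has no boundary contribution, so the correction must also vanish on $\partial\Omega$ (for example, carry a factor $u$).

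The remaining interior terms are handled with $\nabla P=2\nabla^2u(\nabla u)+\frac{2}{n}\nabla u+2ku\nabla u$, which rewrites as $2\mathring{\nabla}^2u(\nabla u)$ using $\Delta u=-1-nku$. Using $\mathring{\nabla}^2u=\nabla^2h-\frac{\Delta h}{n}g$, together with $\nabla^2V=-kVg$ and $\mathring{\nabla}^2\varphi=0$, these terms should assemble into a divergence of a field vanishing on $\partial\Omega$, namely the correction term.

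\textbf{Main obstacle.} The hard part is this last algebra: choosing the correction so that the mixed terms $\langle\mathring{\nabla}^2u(\nabla u),\,V\nabla\varphi-\varphi\nabla V-u\nabla V\rangle$ cancel. I would first try correction fields of the form $u\,\mathring{\nabla}^2u(\,\cdot\,)$ applied to $V\nabla\varphi-\varphi\nabla V$. For these I would use the tracelessness of $\mathring{\nabla}^2u$ and the contracted Codazzi-type identity $\operatorname{div}\mathring{\nabla}^2u=\frac{n-1}{n}\nabla\Delta u+\Ric(\nabla u)$, which equals $0$ here because $\nabla\Delta u=-nk\nabla u$ and $\Ric(\nabla u)=(n-1)k\nabla u$. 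This vanishing is precisely where the Einstein hypothesis is used. Once the interior terms cancel, the divergence theorem applied to $W$ gives the $c=0$ identity, and Step 1 then yields the statement for arbitrary $c$.
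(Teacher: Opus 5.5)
Your proposal is correct, and it takes a genuinely different and much more direct route than the paper. Only your Step~1, the $c^2$ trick via Green's identity for $V$ and $u-\varphi$, coincides with the paper's argument.

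The paper does not use the Einstein structure until the very end. It first proves the general identity of Proposition~\ref{intlemma} on an arbitrary manifold with a conformal field. It starts from Green's identity for $Vu$ and $P$ and integrates by parts repeatedly, using Lemma~\ref{lemma1} and the Ricci identity. This leaves error terms in $R-n(n-1)k$, $\Ric-(n-1)kg$, $X(R)$ and $\mathring{\nabla}^2V$, which are then discarded via Lemma~\ref{lemma_tracefree}.

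Your open step does close, with two small adjustments. Write $A=\mathring{\nabla}^2u$, so that $\nabla P=2A(\nabla u)$ and $\operatorname{div}A=0$, as you noted. Then
\[
\operatorname{div}\bigl(P(V\nabla h-h\nabla V)-Vu\nabla P\bigr)=-Vu\,\Delta P-2A\bigl(\nabla u,\,V\nabla\varphi-\varphi\nabla V+2u\nabla V\bigr).
\]
So the coefficient of $u\nabla V$ among your mixed terms should be $+2$, not $-1$.

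The structural fact you need is the following. Since $A$ is symmetric, traceless and divergence free, $\operatorname{div}(A(Y))=\langle A,\nabla Y\rangle=0$ whenever $\nabla Y$ is a multiple of $g$ plus a skew tensor. This holds for $Y=\nabla V$, because $\nabla^2V=-kVg$. It also holds for $Y=V\nabla\varphi-\varphi\nabla V$, whose covariant derivative is $-\tfrac{V}{n}g$ plus a skew term. Hence
\[
\operatorname{div}\bigl(uA(V\nabla\varphi-\varphi\nabla V)\bigr)=A(\nabla u,V\nabla\varphi-\varphi\nabla V)\quad\text{and}\quad \operatorname{div}\bigl(u^2A(\nabla V)\bigr)=2uA(\nabla u,\nabla V).
\]
The right correction is therefore $2u\,\mathring{\nabla}^2u\bigl(V\nabla\varphi-\varphi\nabla V+u\nabla V\bigr)$. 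Besides the piece you proposed, it needs the extra term $2u^2\mathring{\nabla}^2u(\nabla V)$. It vanishes on $\partial\Omega$ and gives exactly $\operatorname{div}W=-2Vu|\mathring{\nabla}^2u|^2$, with flux $u_\nu^2(Vh_\nu-hV_\nu)$.

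Each approach buys something different. Yours gives a pointwise divergence identity and isolates exactly why the Einstein hypothesis makes every error term disappear. The paper's costs a long computation but yields Proposition~\ref{intlemma}, whose explicit curvature error terms are what one needs beyond the Einstein case, for example in stability questions.

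One caveat applies to both arguments. The fact $\nabla^2V=-kVg$, equivalently Lemma~\ref{lemma_tracefree}, follows from the Einstein condition only when $n\ge 3$, since $\mathcal{L}_X\Ric=-(n-2)\nabla^2V-(\Delta V)g$. In dimension two, $\mathring{\nabla}^2V=0$ must be added as a hypothesis.
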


\begin{remark}
In the Euclidean setting, where the position vector field $x$ is conformal with conformal factor $V\equiv 1$, the previous result reduces to the classical identity established in \cite[Theorem 2.1]{poggesi}.
\end{remark}

\textbf{Organization of the paper.}
In Section~\ref{sec_2}, we recall some basic concepts and preliminary results that will be used throughout the paper. In Section~\ref{sec_estimates}, we investigate results related to torsional rigidity, establishing bounds and rigidity statements. In particular, we prove Theorem~\ref{MainTh_1}. Finally, in Section~\ref{sec_deficit}, we establish a general integral identity associated with problem \eqref{Dirichlet2}, extending the identity obtained in \cite[Theorem 2.1]{poggesi}; see Proposition~\ref{intlemma}. As an application, we prove Theorem~\ref{MainTh_2}.

\section{Preliminaries}\label{sec_2}

In this section, we gather the essential concepts and auxiliary results that will be used throughout the paper. Whenever appropriate, we highlight how each tool connects to the problems and goals outlined in the Introduction.

We begin by recalling Serrin's type problem in the Riemannian setting:
\begin{eqnarray}
\Delta u + nku  &=& -1 \quad \text{in} \quad \Omega, \quad u = 0 \quad \text{on} \quad \partial \Omega, \label{serrinoka}\\
u_{\nu} &=& -c, \quad \text{on} \quad \partial \Omega, \label{serrinokb}    
\end{eqnarray}
where $c$ is a constant and $k\in\mathbb{R}$. As discussed in the Introduction, the parameter $k$ reflects a lower bound on the Ricci curvature of the ambient manifold, and the overdetermined condition \eqref{serrinokb} is the distinctive feature of Serrin-type rigidity phenomena.

A key quantity associated with problem \eqref{serrinoka} is the torsional rigidity of $\Omega$. By integration by parts, one easily verifies that
\[
\int_{\Omega} |\nabla u|^2dv = \int_{\Omega} (-\Delta u)  u \, dv = \int_{\Omega} (1 + n k u)  u \, dv= \int_{\Omega}  u \, dv + n k \int_{\Omega} u^2 \,dv.
\]
This motivates the definition of torsional rigidity
\begin{equation}\label{eqtorsion}
T(\Omega) := \int_{\Omega} |\nabla u|^2\, dv - n k \int_{\Omega} u^2\, dv = \int_{\Omega} u \, dv.
\end{equation}
The quantity $T(\Omega)$ will be central in Section~\ref{sec_estimates}, where we establish sharp bounds for it under curvature assumptions.

A classical approach to Serrin's problem, both in Euclidean space and on Riemannian manifolds, employs an auxiliary function known as the $P$-function. In our setting, it is given by
\begin{equation}\label{pfunction}
P := |\nabla u|^2 + \dfrac{2}{n}u + ku^2.
\end{equation}
The importance of $P$ stems from the following fact: under the curvature assumption $\operatorname{Ric} \geq (n-1)k g$, the function $P$ is subharmonic for any solution of \eqref{serrinoka}. This was proved by Farina and Roncoroni \cite{farina2022} using the Bochner identity (see also \cite{ciraolo}), and its link with a Pozozaev-type identity is the cornerstone of the rigidity phenomenum in this setting.

More precisely, they established the following Obata-type result, which will be instrumental in the proofs of Theorems~\ref{thm:main} and \ref{thm:gradient_estimate}.

\begin{lemma}[\cite{farina2022}]\label{farina-roncoroni}
Let $(M^{n},g)$ be an $n$-dimensional Riemannian manifold satisfying  
\begin{equation}\label{ricci_bound}
\operatorname{Ric} \geq (n-1)k g, \quad k \in \mathbb{R}.    
\end{equation}
Let $\Omega \subset M$ be a domain and $u \in C^{2}(\Omega)$ a solution of 
\[
\Delta u + nku = -1 \quad \text{in } \Omega.
\]
Then the $P$-function defined in \eqref{pfunction} satisfies 
\[
\Delta P \geq 0.
\]
Moreover, $\Delta P= 0$ if and only if $\Omega$ is a metric ball and $u$ is a radial function, i.e., $u$ depends only on the distance from the center of the ball. In this case, $u$ also satisfies
\[
\nabla^2 u = -\left(\frac{1}{n} + ku\right)g \quad \text{in } \Omega,
\]
and $\operatorname{Ric}(\nabla u, \nabla u) = (n-1)k|\nabla u|^2$ in $\Omega$.
\end{lemma}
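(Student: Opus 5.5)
The plan is to run the Bochner formula on $P$ and then use the Ricci lower bound together with a refined Cauchy--Schwarz inequality. Write $f:=-\left(\tfrac1n+ku\right)$, so that $\Delta u=nf$. Bochner's formula gives
\[
\tfrac12\Delta|\nabla u|^2=|\nabla^2u|^2+\langle\nabla u,\nabla\Delta u\rangle+\operatorname{Ric}(\nabla u,\nabla u).
\]
Since $\nabla\Delta u=-nk\nabla u$, the middle term equals $-nk|\nabla u|^2$.

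Next compute the remaining pieces of $P$. Directly, $\Delta(\tfrac2n u)=\tfrac2n\Delta u=-\tfrac2n(1+nku)$. Also $\Delta(ku^2)=2ku\Delta u+2k|\nabla u|^2$, where $2ku\Delta u=-2ku(1+nku)$. Adding everything,
\[
\tfrac12\Delta P=|\nabla^2u|^2-nk|\nabla u|^2+\operatorname{Ric}(\nabla u,\nabla u)-\tfrac1n(1+nku)-ku(1+nku)+k|\nabla u|^2 .
\]
The zeroth-order terms combine as $-\tfrac1n(1+nku)^2=-\tfrac{(\Delta u)^2}{n}$. Hence
\[
\tfrac12\Delta P=|\mathring\nabla^2u|^2+\bigl[\operatorname{Ric}(\nabla u,\nabla u)-(n-1)k|\nabla u|^2\bigr]\ge0,
\]
where both brackets are nonnegative: the first trivially, the second by \eqref{ricci_bound}.

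For the equality case, $\Delta P=0$ forces each nonnegative term to vanish. This gives $\nabla^2u=\tfrac{\Delta u}{n}g=-(\tfrac1n+ku)g$ and $\operatorname{Ric}(\nabla u,\nabla u)=(n-1)k|\nabla u|^2$. The converse direction is a direct check: for a radial $u$ on a metric ball with these Hessian and Ricci properties, both terms vanish.

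The main obstacle is deducing from $\nabla^2u=-(\tfrac1n+ku)g$ that $\Omega$ is a metric ball and $u$ is radial. I would argue as follows. Since $\Omega$ is bounded (as in the paper's setting) and $u=0$ on $\partial\Omega$ with $u>0$ inside by the maximum principle (the paper assumes existence of a solution), $u$ attains an interior maximum at a point $p$. There $\nabla^2u(p)=-(\tfrac1n+ku(p))g$ is negative definite, assuming $\tfrac1n+ku>0$ near $p$ (automatic for $k\ge0$; for $k<0$ it follows from $\Delta u(p)\le0$, i.e. $1+nku(p)\ge 0$, with strict inequality from nondegeneracy).

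Along any unit-speed geodesic $\gamma$ from $p$, the function $h(t)=u(\gamma(t))$ solves $h''=-(\tfrac1n+kh)$ with $h'(0)=0$. Thus $h$ depends only on $t$, and it is strictly decreasing as long as $\tfrac1n+kh>0$. Consequently $u$ is a function of $d(p,\cdot)$, its level sets are geodesic spheres, and $\Omega=\{u>0\}$ is the metric ball of radius determined by the first zero of $h$.

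The delicate points are the following:
\begin{itemize}
\item ensuring that every point of $\Omega$ is reached by a minimizing geodesic from $p$ inside $\Omega$ (handled by connectedness and the monotonicity of $h$);
\item ensuring that the geodesic ball lies within the injectivity radius.
\end{itemize}
For these I would invoke the standard Obata/Tashiro-type argument: the Hessian equation makes $|\nabla u|^2+\tfrac2n u+ku^2$ constant, so $|\nabla u|$ depends only on $u$. Then $\nabla u/|\nabla u|$ is a geodesic field, and $u$ is a function of the distance to $p$.
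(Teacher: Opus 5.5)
Your proposal is correct and follows the route the paper relies on. The paper gives no proof of its own: it records exactly your identity $\Delta P=2\bigl(|\mathring{\nabla}^2u|^2+(\operatorname{Ric}-(n-1)kg)(\nabla u,\nabla u)\bigr)$ in the proof of Theorem~\ref{thm:gradient_estimate}, and it cites \cite[Lemma 6]{farina2022} for the Obata-type step that your geodesic ODE argument sketches. Your added hypotheses ($\Omega$ bounded, $u=0$ on $\partial\Omega$) and your strengthened converse are genuinely needed, since a ball and a radial $u$ alone do not force $\Delta P=0$ (e.g.\ $k=-1$ on a Euclidean ball, where $\Delta P\geq 2(n-1)|\nabla u|^2>0$ away from the center).

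One slip: for $k>0$ the maximum principle does not give $u>0$ (it fails, e.g., for geodesic balls larger than a hemisphere in $\mathbb{S}^n$ with $k=1$, where the solution is negative). Positivity should therefore be taken as a hypothesis, as in Corollary~\ref{cor:rigidity}.
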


In Section~\ref{sec_deficit}, we shift our perspective to study the spherical deficit for solutions of \eqref{serrinoka}. This requires a richer geometric setting: Riemannian manifolds endowed with a conformal vector field. Recall that a vector field $X$ on $(M^n,g)$ is called conformal if its flow consists of conformal transformations. Equivalently, there exists a smooth function $V$ on $M$ such that the Lie derivative of the metric satisfies $\mathcal{L}_X g = 2V g$. The function $V$ is called the conformal factor of $X$. When $X$ is also closed (i.e., its dual $1$-form is closed), we obtain a particularly tractable structure that generalizes the position vector field in Euclidean space.

The following lemma, which is treated for example in \cite[Lemma 2.2]{andrade2025}, relates the conformal factor $V$ to the scalar curvature $R$ of the manifold. It will be crucial in the derivation of our general integral identity (Proposition~\ref{intlemma}).

\begin{lemma}\label{lemma1}
Let $(M^{n},g)$ be a Riemannian manifold endowed with a conformal vector field $X$ with conformal factor $V$. Then
\begin{equation}\label{Lapl_V}
-(n-1)\Delta V
=
\frac{1}{2}X(R)+VR,
\end{equation}
where $R$ denotes the scalar curvature of $(M^{n},g)$.
\end{lemma}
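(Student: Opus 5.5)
Let $X$ be conformal with $\mathcal{L}_X g=2Vg$, i.e. $\frac12(\nabla_iX_j+\nabla_jX_i)=Vg_{ij}$. The identity \eqref{Lapl_V} follows from a standard curvature computation on the derivatives of $X$, which we organize as follows.

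\textbf{Step 1: the second-derivative formula.} For a Killing field one has $\nabla^2 X=-R(X,\cdot)\cdot$. The conformal analogue comes from differentiating the defining equation once more and combining three permutations of indices. Write $\nabla_iX_j=Vg_{ij}+F_{ij}$ with $F$ skew. Then
\[
\nabla_k\nabla_iX_j=\nabla_kV\,g_{ij}+\nabla_kF_{ij}.
\]
Taking the cyclic combination
\[
\nabla_k\nabla_iX_j+\nabla_i\nabla_jX_k-\nabla_j\nabla_kX_i,
\]
and using the Ricci identity for commutators of covariant derivatives of $X$, we express $\nabla_k\nabla_iX_j$ as a curvature term $R_{\cdot\cdot\cdot\cdot}X$ plus terms built from $\nabla V$ times the metric. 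In our sign convention the result should read
\[
\nabla_k\nabla_iX_j = -R_{jkil}X^l\ (\text{up to the sign convention}) + \nabla_kV\,g_{ij}+\nabla_iV\,g_{jk}-\nabla_jV\,g_{ki}.
\]

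\textbf{Step 2: first contraction.} Contract $i=j$, or equivalently compute $\Delta X$. We obtain
\[
\Delta X_k=-\Ric_{kl}X^l+(2-n)\nabla_kV.
\]
As a cross-check, use $\operatorname{div}X=nV$, which gives $\nabla_k\operatorname{div}X=n\nabla_kV$. The contraction over $k,j$ in the commuted form yields
\[
\nabla^i\nabla_kX_i=\nabla_k(nV)+\Ric_{kl}X^l .
\]

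\textbf{Step 3: divergence with Bianchi.} Take the divergence of the symmetric identity $\nabla_iX_j+\nabla_jX_i=2Vg_{ij}$ twice. One clean route is to apply $\nabla^k$ to the formula of Step 2. Then:
\begin{itemize}
\item Use $\nabla^k(\Ric_{kl}X^l)=\tfrac12 X(R)+\Ric_{kl}\nabla^kX^l=\tfrac12X(R)+VR$ (the skew part of $\nabla X$ drops against the symmetric $\Ric$), by the contracted Bianchi identity.
\item Use $\nabla^k\Delta X_k=\Delta(\operatorname{div}X)+$ curvature terms, which is again obtained by commuting derivatives and gives $n\Delta V+\nabla^k(\Ric_{kl}X^l)$.
\end{itemize}
Collecting terms gives
\[
n\Delta V + \tfrac12X(R)+VR = -\tfrac12X(R)-VR + (2-n)\Delta V + \dots
\]

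\textbf{Main obstacle.} The step I expect to be hardest is the commutator bookkeeping in $\nabla^k\Delta X_k$. This is where sign errors most easily occur, and the coefficients must combine to $-(n-1)\Delta V=\tfrac12X(R)+VR$.

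To get the coefficients right, I would verify against two cases:
\begin{itemize}
\item On $\mathbb{S}^n$ with $X=\nabla x_{n+1}$, we have $V=-x_{n+1}$, $\Delta V=-nV$ and $R=n(n-1)$, so both sides equal $n(n-1)V$.
\item In $\mathbb{R}^n$ with $X=x$, both sides vanish.
\end{itemize}
These two checks fix the sign convention unambiguously.
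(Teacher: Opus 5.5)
The paper does not prove this lemma itself; it imports it from \cite[Lemma 2.2]{andrade2025}. Your proposal therefore stands in for a citation, and it is correct. The three formulas you need are all right: $\Delta X_k=(2-n)\nabla_kV-\Ric_{kl}X^l$, $\nabla^k(\Ric_{kl}X^l)=\tfrac12X(R)+VR$, and $\nabla^k\Delta X_k=n\Delta V+\nabla^k(\Ric_{kl}X^l)$. The last one holds for any vector field, because for any $2$-tensor $B$ one has $\nabla^k\nabla^iB_{ik}=\nabla^i\nabla^kB_{ik}$ (the commutator contributes $\Ric^{ab}B_{ab}-\Ric^{ab}B_{ab}=0$). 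Nothing hides in your ``$\dots$'': equating the two expressions for $\nabla^k\Delta X_k$ gives $2(n-1)\Delta V=-X(R)-2VR$, which is exactly \eqref{Lapl_V}.

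A few slips need cleaning up, though none affects the conclusion. In Step~1 the curvature term has its indices misplaced. In any of the standard conventions, the slot contracted with $X$ must lie in the same antisymmetric pair as $k$, e.g.\ $R_{klij}X^l$ when $[\nabla_a,\nabla_b]X^c=R_{abd}{}^{c}X^d$. With your $R_{jkil}X^l$, a Killing field on a round sphere would have $\nabla_k\nabla_iX_j$ not skew in $i,j$. Also, $\Delta X$ comes from contracting $k=i$, not $i=j$; the latter only returns $\nabla_k\operatorname{div}X=n\nabla_kV$.

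Step~1 is in fact superfluous. Your cross-check $\nabla^i\nabla_kX_i=n\nabla_kV+\Ric_{kl}X^l$, combined with $\nabla^i(\nabla_iX_k+\nabla_kX_i)=2\nabla_kV$, gives Step~2 directly. A shorter route still: apply $\nabla^k$ to the cross-check and note that the double divergence of $\nabla X=Vg+F$ is $\Delta V$, since the skew part $F$ has zero double divergence. This yields $\Delta V=n\Delta V+\tfrac12X(R)+VR$ in one line.

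Finally, your two test cases do not fix conventions ``unambiguously''. The Euclidean one is vacuous, and neither case sees the $X(R)$ term. That coefficient is instead fixed by the convention-independent contracted Bianchi identity $\nabla^k\Ric_{kl}=\tfrac12\nabla_lR$.
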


We also recall the following result, particularly applicable in the case of Einstein manifolds. It is a consequence of \cite[Lemma 2.1]{kr}, see also \cite[p.~160]{yano}.

\begin{lemma}\label{lemma_tracefree}
Let $(M^{n},g)$ be a Riemannian manifold endowed with a conformal vector field $X$ with conformal factor $V$. Then
\[
\mathcal{L}_{X}\mathring{\operatorname{Ric}}=0
\quad\Longleftrightarrow\quad
\mathring{\nabla}^{2}V=0.
\]

In particular, if $(M^{n},g)$ is Einstein, then
\[
\mathring{\nabla}^{2}V=0.
\]
\end{lemma}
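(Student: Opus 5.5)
The plan is to compute $\mathcal{L}_X\operatorname{Ric}$ explicitly by viewing the flow of $X$ as a conformal deformation of the metric, and then to take trace-free parts. Let $\phi_t$ be the local flow of $X$. Then $\frac{d}{dt}\big|_{t=0}\phi_t^*g=\mathcal{L}_Xg=2Vg$. Since $\phi_t^*\operatorname{Ric}(g)=\operatorname{Ric}(\phi_t^*g)$ by naturality, we get
\[
\mathcal{L}_X\operatorname{Ric}=D\operatorname{Ric}_g(2Vg),
\]
the linearization of the Ricci tensor in the direction $h=2Vg$. I will use the standard first variation formula for a conformal perturbation $h=2Vg$ (equivalently, differentiate $\operatorname{Ric}(e^{2tV}g)=\operatorname{Ric}-(n-2)t(\nabla^2V-\ldots)-t\,\Delta V\,g+O(t^2)$). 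This gives
\[
\mathcal{L}_X\operatorname{Ric}=-(n-2)\nabla^2V-(\Delta V)\,g .
\]
As a consistency check, tracing and using $\mathcal{L}_Xg^{-1}=-2Vg^{-1}$ gives $X(R)=\operatorname{tr}(\mathcal{L}_X\operatorname{Ric})-2VR=-2(n-1)\Delta V-2VR$. This is exactly Lemma~\ref{lemma1}, which confirms the sign conventions.

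Next I write $\mathring{\operatorname{Ric}}=\operatorname{Ric}-\frac{R}{n}g$ and observe that $\mathcal{L}_X\mathring{\operatorname{Ric}}$ is itself trace-free. Indeed,
\[
0=\mathcal{L}_X\bigl(g^{ij}\mathring{R}_{ij}\bigr)=-2V\operatorname{tr}\mathring{\operatorname{Ric}}+\operatorname{tr}\bigl(\mathcal{L}_X\mathring{\operatorname{Ric}}\bigr)=\operatorname{tr}\bigl(\mathcal{L}_X\mathring{\operatorname{Ric}}\bigr).
\]
On the other hand,
\[
\mathcal{L}_X\mathring{\operatorname{Ric}}=\mathcal{L}_X\operatorname{Ric}-\frac{X(R)}{n}g-\frac{2VR}{n}g .
\]
The last two terms are pure trace, so $\mathcal{L}_X\mathring{\operatorname{Ric}}$ equals the trace-free part of $\mathcal{L}_X\operatorname{Ric}$. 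By the formula above, that trace-free part is $-(n-2)\mathring{\nabla}^2V$. Hence
\[
\mathcal{L}_X\mathring{\operatorname{Ric}}=-(n-2)\,\mathring{\nabla}^2V ,
\]
and for $n\ge 3$ the claimed equivalence follows immediately.

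For the Einstein case, $\mathring{\operatorname{Ric}}\equiv0$, so $\mathcal{L}_X\mathring{\operatorname{Ric}}=0$ and therefore $\mathring{\nabla}^2V=0$.

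The main obstacle is justifying the variation formula $D\operatorname{Ric}(2Vg)=-(n-2)\nabla^2V-\Delta V\,g$ with correct signs and constants. I would derive it from the general formula
\[
D\operatorname{Ric}(h)_{ij}=\tfrac12\bigl(\nabla^k\nabla_ih_{jk}+\nabla^k\nabla_jh_{ik}-\Delta h_{ij}-\nabla_i\nabla_j\operatorname{tr}h\bigr)
\]
with $h=2Vg$, which directly yields $\nabla_i\nabla_jV+\nabla_j\nabla_iV-\Delta V g_{ij}-n\nabla_i\nabla_jV$. Alternatively, one can cite \cite[Lemma 2.1]{kr} or \cite{yano}.

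I also note that the argument degenerates when $n=2$: there $\mathring{\operatorname{Ric}}\equiv0$ always, while $\mathring{\nabla}^2V$ need not vanish. So the equivalence should be understood for $n\ge3$, which is the setting of the paper.
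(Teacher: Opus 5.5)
Your argument is correct and is essentially the standard computation behind the references the paper cites in place of a proof (\cite{kr}, \cite{yano}): naturality gives $\mathcal{L}_X\operatorname{Ric}=-(n-2)\nabla^2V-(\Delta V)g$, and since $\mathcal{L}_Xg^{-1}=-2Vg^{-1}$ forces $\mathcal{L}_X\mathring{\operatorname{Ric}}$ to be trace-free, it equals $-(n-2)\mathring{\nabla}^2V$. Your dimensional caveat is also right and worth keeping, since the paper leaves $n\ge 3$ implicit: on flat $\mathbb{R}^2$ the conformal field $X=(x^3-3xy^2)\partial_x+(3x^2y-y^3)\partial_y$ has $V=3(x^2-y^2)$ with $\mathring{\nabla}^2V\neq 0$, so both the equivalence and the Einstein consequence fail for $n=2$.
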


\begin{remark}
We recall that a conformal vector field $X$ is said to be \emph{closed conformal} if the $1$-form dual to $X$ is closed. Equivalently,
\[
\nabla_YX = VY,
\]
for every $Y\in\mathfrak{X}(M)$, where $V$ denotes the conformal factor of $X$. A remarkable property of manifolds admitting a nontrivial closed conformal vector field is that they are locally isometric to a warped product with a one-dimensional factor; see, for instance, \cite[Section 3]{montiel}. This class includes space forms and many other geometrically relevant examples.
\end{remark}



\section{Estimates of the Torsional Rigidity}\label{sec_estimates}

We start this section giving our first result. Such a result may be compared with \cite[Theorem 1.6]{wang2020}.

\begin{theorem}\label{thm:main}
Let $(M^n,g)$ be an $n$-dimensional Riemannian manifold whose Ricci curvature satisfies $\operatorname{Ric}\geq (n-1)kg$, $k\in\mathbb{R}$, and let $\Omega\subset M$ be a bounded domain for which there exists a solution $u$ of
\begin{equation}\label{eq:main_problem}
\Delta u + nku = -1\quad \text{in } \Omega,\qquad u = 0\quad \text{on } \partial\Omega.
\end{equation}
Let $\nu$ denote the outward unit normal vector field along $\partial\Omega$.

\begin{itemize}
\item[(a)] We have
\begin{equation}\label{eq:min_unn}
\min_{x\in\partial\Omega} u_{\nu\nu}(x)\leq -\frac{1}{n}.
\end{equation}
Moreover, equality holds in \eqref{eq:min_unn} if and only if $u$ is radial and $\Omega$ is a metric ball in $(M^n,g)$.

\item[(b)] Let $\operatorname{Area}(\partial\Omega)$, $H$, $T(\Omega)$, and $\operatorname{Vol}(\Omega)$ denote the area of $\partial\Omega$, the mean curvature of $\partial\Omega$, the torsional rigidity of $\Omega$, and the volume of $\Omega$, respectively. If $H\geq 0$ on $\partial\Omega$, then
\begin{equation}\label{eq:integral_unn}
\int_{\partial\Omega}u_{\nu\nu}d\sigma\leq \left(\frac{n-1}{n}\right)^{1/2}\left(\operatorname{Vol}(\Omega) + nkT(\Omega)\right)^{1/2}\left(\int_{\partial\Omega}Hd\sigma\right)^{1/2} - \operatorname{Area}(\partial\Omega).
\end{equation}
Equality holds in \eqref{eq:integral_unn} if and only if $u$ is radial and $\Omega$ is a metric ball in $(M^n,g)$.
\end{itemize}
\end{theorem}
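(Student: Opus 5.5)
The plan is to use the $P$-function \eqref{pfunction}, $P=|\nabla u|^2+\frac{2}{n}u+ku^2$, together with Lemma~\ref{farina-roncoroni}.

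\emph{Boundary identities.} On $\partial\Omega$ we have $u=0$, so $\nabla u=u_\nu\nu$, and the decomposition of the Laplacian along the boundary gives
\[
\Delta u=u_{\nu\nu}+Hu_\nu .
\]
Hence the equation \eqref{eq:main_problem} restricted to $\partial\Omega$ reads
\[
u_{\nu\nu}+Hu_\nu=-1 .
\]
The derivative of $P$ in the normal direction at the boundary is
\[
P_\nu=2u_\nu u_{\nu\nu}+\tfrac{2}{n}u_\nu+2kuu_\nu=2u_\nu\Bigl(u_{\nu\nu}+\tfrac1n\Bigr).
\]
By the strong maximum principle $u>0$ in $\Omega$, where we assume $1+nku>0$, as is implicit in existence for this problem, so that $-\Delta u>0$ and Hopf applies. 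Therefore $u_\nu<0$ on $\partial\Omega$.

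\emph{Part (a).} Integrating $\Delta P\ge0$ over $\Omega$ gives
\[
0\le\int_{\partial\Omega}P_\nu\,d\sigma=2\int_{\partial\Omega}u_\nu\Bigl(u_{\nu\nu}+\tfrac1n\Bigr)d\sigma .
\]
Suppose, for contradiction, that $u_{\nu\nu}>-\frac1n$ everywhere on $\partial\Omega$. Since $u_\nu<0$, the integrand is then strictly negative, which is impossible. Hence $\min_{\partial\Omega} u_{\nu\nu}\le -\frac1n$.

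For the equality case, assume $\min u_{\nu\nu}=-\frac1n$. Then $u_{\nu\nu}+\frac1n\ge0$ on $\partial\Omega$, so the integrand $u_\nu(u_{\nu\nu}+\frac1n)$ is $\le0$ while its integral is $\ge0$. The integral therefore vanishes, which forces $\int_\Omega\Delta P=0$. Since $\Delta P\ge 0$ pointwise, this gives $\Delta P\equiv0$, and Lemma~\ref{farina-roncoroni} then yields that $\Omega$ is a metric ball with $u$ radial. Conversely, in that case $\nabla^2u=-(\frac1n+ku)g$, so on the boundary $u_{\nu\nu}=-\frac1n$ identically and equality holds.

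\emph{Part (b).} Integrating the boundary equation $u_{\nu\nu}+Hu_\nu=-1$ gives
\[
\int_{\partial\Omega}u_{\nu\nu}\,d\sigma=-\operatorname{Area}(\partial\Omega)-\int_{\partial\Omega}Hu_\nu\,d\sigma .
\]
It remains to bound $\int_{\partial\Omega} H|u_\nu|$. Since $H\ge0$, Cauchy--Schwarz gives
\[
\int_{\partial\Omega} H|u_\nu|\,d\sigma\le\Bigl(\int_{\partial\Omega}H\,d\sigma\Bigr)^{1/2}\Bigl(\int_{\partial\Omega}Hu_\nu^2\,d\sigma\Bigr)^{1/2}.
\]
The main step is to show
\[
\int_{\partial\Omega}Hu_\nu^2\,d\sigma\le\frac{n-1}{n}\bigl(\operatorname{Vol}(\Omega)+nkT(\Omega)\bigr).
\]
I would obtain this from Reilly's formula applied to $u$. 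With $u=0$ on $\partial\Omega$ it reads
\[
\int_\Omega\Bigl[(\Delta u)^2-|\nabla^2u|^2-\operatorname{Ric}(\nabla u,\nabla u)\Bigr]dv=\int_{\partial\Omega}Hu_\nu^2\,d\sigma .
\]
Now use $|\nabla^2u|^2\ge(\Delta u)^2/n$ and $\operatorname{Ric}\ge(n-1)k$ to get
\[
\int_{\partial\Omega}Hu_\nu^2\,d\sigma\le\frac{n-1}{n}\int_\Omega\Bigl[(\Delta u)^2-nk|\nabla u|^2\Bigr]dv .
\]
The integral on the right is computed from the equation and \eqref{eqtorsion}. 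First, $(\Delta u)^2=(1+nku)^2=1+2nku+n^2k^2u^2$. Second, $\int_\Omega|\nabla u|^2=T+nk\int_\Omega u^2$. Therefore
\[
\int_\Omega\Bigl[(\Delta u)^2-nk|\nabla u|^2\Bigr]dv=\operatorname{Vol}(\Omega)+2nkT+n^2k^2\!\int u^2-nkT-n^2k^2\!\int u^2=\operatorname{Vol}(\Omega)+nkT .
\]
This gives exactly the claimed constant.

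Combining the three displays gives \eqref{eq:integral_unn}. In the equality case the Reilly step must be sharp, so $\mathring\nabla^2u=0$. Then $\nabla^2 u=-(\frac1n+ku)g$, and the Bochner computation of Lemma~\ref{farina-roncoroni} gives $\Delta P=0$, hence a ball with $u$ radial. Conversely, for such a ball $\mathring\nabla^2 u=0$, $\operatorname{Ric}(\nabla u,\nabla u)=(n-1)k|\nabla u|^2$ by the lemma, and $u_\nu$ and $H$ are constant on the geodesic sphere, so Cauchy--Schwarz is also an equality.

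The main obstacle I anticipate is the Reilly step. One must check that the algebra produces $\operatorname{Vol}+nkT$ exactly and that the sign conventions for $H$ match the boundary Laplacian decomposition. One must also make sure the positivity $u_\nu<0$ is justified when $k>0$.
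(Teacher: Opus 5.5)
Your argument is essentially the paper's. Part (b) matches it step for step: the boundary identity $u_{\nu\nu}+Hu_\nu=-1$, Reilly's formula with the trace Cauchy--Schwarz inequality and the Ricci bound, the computation $\int_\Omega[(\Delta u)^2-nk|\nabla u|^2]\,dv=\operatorname{Vol}(\Omega)+nkT(\Omega)$, and then H\"older. In part (a) the paper integrates Bochner's formula and inserts $|\nabla^2u|^2\ge\frac1n(\Delta u)^2$ and $\int_{\partial\Omega}u_\nu u_{\nu\nu}\,d\sigma\le L\int_{\partial\Omega}u_\nu\,d\sigma$. Written out, this is exactly your inequality $\int_{\partial\Omega}u_\nu\bigl(u_{\nu\nu}+\frac1n\bigr)d\sigma=\frac12\int_\Omega\Delta P\,dv\ge0$. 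Your packaging through $P_\nu=2u_\nu(u_{\nu\nu}+\frac1n)$ is slightly cleaner. It also makes the paper's separate appeal to $\operatorname{Vol}(\Omega)+nkT(\Omega)>0$ unnecessary, since that quantity equals $-\int_{\partial\Omega}u_\nu\,d\sigma$.

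The equality case of (b) has a gap, and the paper's proof has the same one. Equality in \eqref{eq:integral_unn} forces equality in the Reilly step only if $\int_{\partial\Omega}H\,d\sigma>0$, because the Reilly bound enters multiplied by $\bigl(\int_{\partial\Omega}H\,d\sigma\bigr)^{1/2}$. If $H\equiv0$, your boundary identity already gives $\int_{\partial\Omega}u_{\nu\nu}\,d\sigma=-\operatorname{Area}(\partial\Omega)$. That is also the right-hand side, so equality holds automatically and nothing is forced.

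This case actually occurs. Take the flat cylinder $\mathbb{S}^1\times\mathbb{R}$ with $k=0$ and $\Omega=\mathbb{S}^1\times(0,1)$. The boundary consists of geodesics, and the solution is $u=\frac12t(1-t)$. Equality holds in \eqref{eq:integral_unn}, yet $\nabla^2u=-dt\otimes dt$ is not a multiple of $g$, and $\Omega$ is not a metric ball. So the rigidity claim in (b) needs the extra hypothesis $H\not\equiv0$; under it, your argument goes through.

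A smaller point concerns your justification of $1+nku>0$ as ``implicit in existence''. This is wrong for $k>0$. On a geodesic ball $B_R\subset\mathbb{S}^n$ with $\pi/2<R<\pi$ and $k=1$, the function $u=\frac1n(\cos r/\cos R-1)$ solves the problem. It is negative at the center, $1+nku$ changes sign, and $u_\nu>0$ on $\partial B_R$. For $k\le0$ you can prove the claim: apply the maximum principle to $w=1+nku$, which satisfies $\Delta w=-nkw$ with $w=1$ on $\partial\Omega$. For $k>0$ it is a genuine extra hypothesis; for instance, $\lambda_1(\Omega)>nk$ gives $u>0$. The paper glosses over the same point when it calls $\Delta u<0$ immediate for $k\ge0$.
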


\begin{proof}
Since $u$ satisfies \eqref{eq:main_problem}, we have $\Delta u<0$. Although this is immediate when $k\geq 0$, it also remains true in the case $k<0$ by \cite[Lemma 5.2]{andrade2025}. Therefore, the strong maximum principle together with Hopf's lemma yields
\begin{equation}\label{eq:unu_negative}
u_{\nu}(x)<0,\qquad \forall x\in\partial\Omega.
\end{equation}

Using the classical Bochner formula
\[\frac{1}{2}\Delta |\nabla u|^2 = |\nabla^2 u|^2 +\langle \nabla u,\nabla \Delta u\rangle +\operatorname{Ric}(\nabla u,\nabla u),\]
together with \eqref{eq:main_problem} and the assumption $\operatorname{Ric}\geq (n-1)kg$, we obtain
\begin{equation}\label{eq:bochner_estimate}
\frac{1}{2}\Delta |\nabla u|^2\geq |\nabla^2 u|^2 -nk|\nabla u|^2 +(n-1)k|\nabla u|^2 = |\nabla^2 u|^2 -k|\nabla u|^2.
\end{equation}
Integrating over $\Omega$ and applying the divergence theorem, we get
\begin{equation}\label{eq:integral_bochner}
\int_{\Omega}|\nabla^2 u|^2dv -k\int_{\Omega}|\nabla u|^2dv\leq \frac{1}{2}\int_{\partial\Omega}\frac{\partial}{\partial\nu} |\nabla u|^2d\sigma.
\end{equation}
Since $u = 0$ on $\partial\Omega$, it follows that
\begin{equation}\label{eq:boundary_grad}
\nabla u = u_{\nu}\nu\ \text{on }\partial\Omega,\quad\text{and}\quad \frac{\partial}{\partial\nu} |\nabla u|^2 = 2\nabla^2 u(\nabla u,\nu) = 2u_{\nu}u_{\nu\nu}\ \text{on }\partial\Omega.
\end{equation}
Combining this with \eqref{eq:integral_bochner}, we conclude that
\begin{equation}\label{eq:inequality_hessian}
\int_{\Omega}|\nabla^2 u|^2dv -k\int_{\Omega}|\nabla u|^2dv\leq \int_{\partial\Omega}u_{\nu}u_{\nu\nu}d\sigma.
\end{equation}

Define
\[L = \min_{x\in\partial\Omega} u_{\nu\nu}(x).\]
Then, by \eqref{eq:unu_negative} and the divergence theorem, we obtain
\[\int_{\partial\Omega}u_{\nu}u_{\nu\nu}d\sigma\leq L\int_{\partial\Omega}u_{\nu}d\sigma = L\int_{\Omega}\Delta u dv= -L\int_{\Omega}(nku+1)dv.\]
Substituting this into \eqref{eq:inequality_hessian}, we infer
\begin{equation}\label{eq:after_L}
\int_{\Omega}|\nabla^2 u|^2dv -k\int_{\Omega}|\nabla u|^2dv\leq -nkL\int_{\Omega}u \, dv- L\operatorname{Vol}(\Omega).
\end{equation}
Using the definition of torsional rigidity $T(\Omega)=\int_{\Omega}udv$ and the identity $\int_{\Omega}|\nabla u|^2dv = T(\Omega)+nk\int_{\Omega}u^2dv$, we rewrite \eqref{eq:after_L} as
\begin{equation}\label{eq:before_CS}
\int_{\Omega}|\nabla^2 u|^2dv -nk^2\int_{\Omega}u^2dv +kT(\Omega)(-1+nL)\leq -L\operatorname{Vol}(\Omega).
\end{equation}

On the other hand, the Cauchy-Schwarz inequality together with \eqref{eq:main_problem} yields
\[|\nabla^2 u|^2\geq \frac{(\Delta u)^2}{n} = \frac{(1+nku)^2}{n} = nk^2 u^2 +2ku +\frac{1}{n}.\]
Integrating over $\Omega$, we obtain
\begin{equation}\label{eq:CS_integral}
\int_{\Omega}|\nabla^2 u|^2dv\geq nk^2\int_{\Omega}u^2\, dv +2k\int_{\Omega}u\, dv +\frac{1}{n}\operatorname{Vol}(\Omega).
\end{equation}
Substituting \eqref{eq:CS_integral} into \eqref{eq:before_CS}, we arrive at
\[L\left(nkT(\Omega) + \operatorname{Vol}(\Omega)\right)\leq -\frac{1}{n}\left(nkT(\Omega) + \operatorname{Vol}(\Omega)\right).\]
We note that $nkT(\Omega) + \operatorname{Vol}(\Omega) > 0$: this is immediate when $k\geq 0$, while for $k<0$ it follows from \cite[Lemma 5.2]{andrade2025}. Therefore,
\begin{equation}\label{eq:L_bound}
L\leq -\frac{1}{n}.
\end{equation}
This proves part (a) of the theorem.

We now analyze the equality case. Equality holds in \eqref{eq:L_bound} if and only if equality holds in \eqref{eq:bochner_estimate} and \eqref{eq:CS_integral}, which implies
\begin{equation}\label{eq:rigidity_conditions}
\nabla^2 u = \frac{\Delta u}{n} g\qquad\text{and}\qquad \operatorname{Ric}(\nabla u,\nabla u) = (n-1)k|\nabla u|^2.
\end{equation}
In particular, the first condition implies that $u$ is radial and that $\Omega$ is a metric ball, by an Obata-type result (see, e.g., \cite[Lemma 6]{farina2022}).

We now prove part (b). Since $u$ vanishes on $\partial\Omega$, it follows from \eqref{eq:main_problem} that $\Delta u = -1$ on $\partial\Omega$. On the other hand, the decomposition of the Laplacian along the boundary gives
\[\Delta u = \Delta_{\partial\Omega}u + Hu_{\nu} + u_{\nu\nu}.\]
Because $u = 0$ on $\partial\Omega$, we have $\Delta_{\partial\Omega}u = 0$. Hence, integrating over $\partial\Omega$, we deduce
\begin{equation}\label{eq:boundary_decomposition}
\operatorname{Area}(\partial\Omega) + \int_{\partial\Omega}u_{\nu\nu}d\sigma = -\int_{\partial\Omega}Hu_{\nu}d\sigma.
\end{equation}

We now invoke Reilly's formula \cite{reilly}, which in the present setting reads
\[\int_{\Omega}\left((\Delta u)^2 - |\nabla^2 u|^2 - \operatorname{Ric}(\nabla u,\nabla u)\right) dv= \int_{\partial\Omega}H u_{\nu}^2d\sigma.\]
Substituting $\Delta u = -(1 + nku)$ into this identity, we obtain
\[\int_{\partial\Omega}H u_{\nu}^2d\sigma = \int_{\Omega}\left((1 + nku)^2 - |\nabla^2 u|^2 - \operatorname{Ric}(\nabla u,\nabla u)\right)dv.\]
Using the estimates $|\nabla^2 u|^2\geq \frac{(\Delta u)^2}{n}$ and $\operatorname{Ric}(\nabla u,\nabla u)\geq (n-1)k|\nabla u|^2$, we infer
\begin{equation}\label{eq:reilly_estimate}
\int_{\partial\Omega}H u_{\nu}^2d\sigma\leq \frac{n-1}{n}\int_{\Omega}\left(1 + 2nku + n^2k^2u^2\right)dv - (n-1)k\int_{\Omega}|\nabla u|^2dv.
\end{equation}
Moreover, equality holds in \eqref{eq:reilly_estimate} if and only if \eqref{eq:rigidity_conditions} is satisfied.

Integration by parts together with the boundary condition $u = 0$ on $\partial\Omega$ gives
\[\int_{\Omega}|\nabla u|^2dv = -\int_{\Omega}u\Delta udv = \int_{\Omega}udv + nk\int_{\Omega}u^2dv = T(\Omega) + nk\int_{\Omega}u^2dv.\]
Combining this with \eqref{eq:reilly_estimate}, we arrive at
\begin{equation}\label{eq:H_bound}
\int_{\partial\Omega}H u_{\nu}^2d\sigma\leq \frac{n-1}{n}\operatorname{Vol}(\Omega) + (n-1)kT(\Omega).
\end{equation}
Since $H\geq 0$ on $\partial\Omega$ and $u_{\nu}<0$ by Hopf's lemma, H\"older's inequality yields
\[-\int_{\partial\Omega}Hu_{\nu}d\sigma\leq \left(\int_{\partial\Omega}H u_{\nu}^2d\sigma\right)^{1/2}\left(\int_{\partial\Omega}Hd\sigma\right)^{1/2}.\]
Substituting \eqref{eq:H_bound} into this estimate and then into \eqref{eq:boundary_decomposition}, we conclude
\[\int_{\partial\Omega}u_{\nu\nu}\leq \left(\frac{n-1}{n}\right)^{1/2}\left(\operatorname{Vol}(\Omega) + nkT(\Omega)\right)^{1/2}\left(\int_{\partial\Omega}Hd\sigma\right)^{1/2} - \operatorname{Area}(\partial\Omega),\]
which is exactly \eqref{eq:integral_unn}. Equality holds if and only if \eqref{eq:rigidity_conditions} holds, and the rigidity conclusion follows from \cite[Lemma 6]{farina2022}. This completes the proof of Theorem \ref{thm:main}.
\end{proof}

We now analyze the rigidity condition \eqref{eq:rigidity_conditions} in a fundamental case: the so-called model manifolds, as motivated in the Introduction. The space forms are the substrate of the following consequence.


\begin{corollary}\label{cor:model}
Let $(M_{\rho}^{n},g_{M_{\rho}^{n}})$ be an $n$-dimensional model manifold whose Ricci curvature satisfies $\operatorname{Ric}\geq (n-1)kg$, $k\in\mathbb{R}$, and let $\Omega\subset M$ be a bounded domain for which there exists a solution $u$ to \eqref{eq:main_problem}. Let $\nu$ denote the outward unit normal vector field along $\partial\Omega$.

\begin{itemize}
\item[(a)] We have
\[\min_{x\in\partial\Omega}u_{\nu\nu}(x)\leq -\frac{1}{n}.\]
Moreover, equality holds if and only if $\Omega$ is a geodesic ball in a space form and $u_{\nu\nu}=-\frac{1}{n}$ on $\partial\Omega$.

\item[(b)] Let $\operatorname{Area}(\partial\Omega)$, $H$, $T(\Omega)$, and $\operatorname{Vol}(\Omega)$ denote the area of $\partial\Omega$, the mean curvature of $\partial\Omega$, the torsional rigidity of $\Omega$, and the volume of $\Omega$, respectively. If $H\geq 0$ on $\partial\Omega$, then
\[\int_{\partial\Omega}u_{\nu\nu}\leq (n(n-1))^{1/2}\left(\operatorname{Vol}(\Omega) + nkT(\Omega)\right)^{1/2}\left(\int_{\partial\Omega}Hd\sigma\right)^{1/2} - \operatorname{Area}(\partial\Omega).\]
Equality holds if and only if $u$ is radial and $\Omega$ is a geodesic ball in a space form.
\end{itemize}
\end{corollary}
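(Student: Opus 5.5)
The plan is to derive both items directly from Theorem \ref{thm:main}, since a model manifold $(M_{\rho}^{n},g_{M_{\rho}^{n}})$ with $\operatorname{Ric}\geq (n-1)kg$ satisfies all of its hypotheses. The real work is in the equality cases: we must upgrade the conclusion ``$\Omega$ is a metric ball and $u$ is radial'' to ``$\Omega$ is a geodesic ball in a space form''.

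\emph{The inequalities.} Part (a) is exactly \eqref{eq:min_unn}. For part (b), inequality \eqref{eq:integral_unn} carries the constant $((n-1)/n)^{1/2}$, and $((n-1)/n)^{1/2}\leq (n(n-1))^{1/2}$. Since $\operatorname{Vol}(\Omega)+nkT(\Omega)>0$ and $\int_{\partial\Omega}H\,d\sigma\geq 0$, the stated bound follows a fortiori. This weakening matters for equality. Equality in the weaker bound would force equality in \eqref{eq:integral_unn} only if the discarded term $\bigl[(n(n-1))^{1/2}-((n-1)/n)^{1/2}\bigr](\operatorname{Vol}(\Omega)+nkT(\Omega))^{1/2}(\int_{\partial\Omega}H\,d\sigma)^{1/2}$ vanished, i.e.\ only if $\int_{\partial\Omega}H\,d\sigma=0$. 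I therefore expect the intended constant to be the sharp $((n-1)/n)^{1/2}$ of Theorem \ref{thm:main}. I would treat the equality statement for that sharp form; there it reduces, exactly as in the proof of Theorem \ref{thm:main}, to the conditions \eqref{eq:rigidity_conditions}.

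\emph{Rigidity.} In either equality case we have \eqref{eq:rigidity_conditions}. By Lemma \ref{farina-roncoroni} this means
\[
\nabla^2 u=-\Bigl(\tfrac1n+ku\Bigr)g,\qquad \operatorname{Ric}(\nabla u,\nabla u)=(n-1)k|\nabla u|^2 .
\]
Moreover $\Omega=B_R(p)$ is a metric ball and $u=f(s)$ with $s=d(p,\cdot)$. The Hessian equation has three consequences.
\begin{enumerate}
\item In the radial direction it gives $f''=-(1/n+kf)$.
\item In the tangential directions it gives $\nabla^2 s=\frac{-(1/n+kf)}{f'}\,(g-ds\otimes ds)$, and the function $\frac{-(1/n+kf)}{f'}$ equals $f''/f'$.
\item Consequently the geodesic spheres about $p$ are umbilic with principal curvatures $h'/h$, where $h:=f'/f''(0)$. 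Since $h$ satisfies $h''+kh=0$, $h(0)=0$, $h'(0)=1$, we get $h=\operatorname{sn}_k$.
\end{enumerate}
Integrating the Riccati equation along radial geodesics, and using the ODE to avoid the critical point of $u$ at $p$, gives $g=ds^2+\operatorname{sn}_k(s)^2g_{\mathbb{S}^{n-1}}$ on $B_R(p)$. This is precisely a geodesic ball of radius $R$ in the space form of curvature $k$; for $k>0$ one also needs $R\le \pi/(2\sqrt{k})$ so that $u>0$.

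\emph{Remaining checks and the main obstacle.} For (a), on $\partial\Omega$ we have $u=0$, so the Hessian identity gives $u_{\nu\nu}=-\frac1n-ku=-\frac1n$. Conversely, on a space-form ball the radial solution satisfies the Hessian identity, so every inequality in the proof of Theorem \ref{thm:main} is an equality. The step I expect to be the main obstacle is this Obata/Tashiro-type reconstruction of the metric from $\mathring{\nabla}^2u=0$. The delicate points are smoothness at the center $p$ and checking that $f'$ does not vanish on $(0,R]$. I would handle the latter with the maximum principle: $u>0$ in $\Omega$, $u$ is radial, and its unique critical point is $p$.
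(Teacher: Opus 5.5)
Your proof is correct, and you obtain the inequalities as the paper does, directly from Theorem~\ref{thm:main}. Your rigidity step, however, is genuinely different.

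The paper uses the \emph{Ricci} half of \eqref{eq:rigidity_conditions} together with the warped structure. It writes $\operatorname{Ric}(\nabla u,\nabla u)=-(n-1)\frac{\rho''}{\rho}|\nabla u|^2$, gets $\rho''+k\rho=0$ on $(0,R)$, and concludes $\rho=\operatorname{sn}_k$ from the conditions at the pole. You discard both the Ricci equality and the model structure, and instead rebuild the metric on $B_R(p)$ from $\mathring{\nabla}^2u=0$ alone, in the Obata--Tashiro manner.

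\emph{What the paper's route buys and assumes.} It is a two-line ODE argument. However, it tacitly places the center of the ball at the pole $o$. Its Ricci formula requires $\nabla u\parallel\partial_t$. For a ball centered at $p\neq o$, $\nabla u$ has components tangent to the spheres $\{t=\mathrm{const}\}$, where the Ricci curvature is $-\frac{\rho''}{\rho}+(n-2)\frac{1-\rho'^2}{\rho^2}$ instead.

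\emph{What your route costs and buys.} It costs the local analysis you list: smoothness at $p$, $f'\neq0$ on $(0,R]$, and radial geodesics from $p$ meeting $\partial\Omega$ exactly at time $R$. The second point is immediate from $f'=f''(0)\operatorname{sn}_k$ with $f''(0)=-(\frac1n+ku(p))<0$. In return, your argument is intrinsic and works for any center. It also shows the model hypothesis is not needed: in the equality case of Theorem~\ref{thm:main} on any manifold, $\Omega$ is isometric to a geodesic ball of the space form of curvature $k$.

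Your reading of (b) is also right. The paper's proof says (b) follows directly from Theorem~\ref{thm:main}, which gives the constant $((n-1)/n)^{1/2}$. With the printed constant $(n(n-1))^{1/2}$ the inequality holds a fortiori, but the ``if'' direction of the equality statement fails. This is because every admissible space-form ball has $\int_{\partial\Omega}H\,d\sigma>0$. One small correction: for $k>0$ you need $R<\pi/(2\sqrt{k})$ strictly, not $\le$. The hemisphere of curvature $k$ has first Dirichlet eigenvalue $nk$, so \eqref{eq:main_problem} has no solution there.
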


\begin{proof}
Besides the fact that $\Omega$ is a metric ball (which follows from the first condition in \eqref{eq:rigidity_conditions}), we now analyze more carefully the rigidity condition
\[\operatorname{Ric}(\nabla u,\nabla u) = (n-1)k|\nabla u|^2.\]
In the particular setting of model manifolds, since $(M,g)$ is rotationally symmetric and $u$ is radial, we have
\[\operatorname{Ric}(\nabla u,\nabla u) = -(n-1)\frac{\rho''(t)}{\rho(t)}|\nabla u|^2.\]
Consequently,
\[-\frac{\rho''(t)}{\rho(t)} = k,\qquad t\in(0,R).\]
Combining this equation with the smoothness conditions at the pole,
\[\rho^{(2k)}(0)=0,\qquad \rho'(0)=1,\]
we conclude that $\rho$ satisfies the differential equation $\rho'' + k\rho = 0$, whose solutions are
\[\rho(t) = 
\begin{cases}
\frac{1}{\sqrt{k}}\sin(\sqrt{k}\,t), & k>0,\\[4pt]
t, & k=0,\\[4pt]
\frac{1}{\sqrt{-k}}\sinh(\sqrt{-k}\,t), & k<0.
\end{cases}\]
These correspond exactly to the space forms $\mathbb{S}^n$, $\mathbb{R}^n$, and $\mathbb{H}^n$, respectively. Hence, the rigidity conditions force $\Omega$ to be a geodesic ball in a space form. The statements (a) and (b) then follow directly from Theorem \ref{thm:main}.
\end{proof}

Now, we explicitly give a limitation for the torsional rigidity in manifolds with Ricci bounded below. Such a result could be compared with \cite[Theorem 1.7]{wang2020}.

\begin{theorem}\label{thm:gradient_estimate}
Let $(M^n,g)$ be an $n$-dimensional Riemannian manifold whose Ricci curvature satisfies $\operatorname{Ric}\geq (n-1)kg$, $k\in\mathbb{R}$, and let $\Omega\subset M$ be a bounded domain for which there exists a solution $u$ of \eqref{eq:main_problem}. Then
\begin{equation}\label{eq:max_gradient_estimate}
\max_{\partial\Omega}|\nabla u|^2\geq \frac{1}{\operatorname{Vol}(\Omega) + nkT(\Omega)}\cdot\frac{n+2}{n}\left[T(\Omega) + nk\int_{\Omega}\left(u^2 + u|\nabla u|^2\right)dv\right].
\end{equation}
Moreover, equality holds in \eqref{eq:max_gradient_estimate} if and only if $u$ is a radial function and $\Omega$ is a metric ball in $(M^n,g)$.
\end{theorem}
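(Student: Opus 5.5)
The plan is to integrate the subharmonic $P$-function of \eqref{pfunction} against $u$ itself, in the spirit of Wang--Xia. As in the proof of Theorem~\ref{thm:main}, I use that $u>0$ in $\Omega$, that $u_\nu<0$ on $\partial\Omega$, and that $\operatorname{Vol}(\Omega)+nkT(\Omega)>0$ (via \cite[Lemma 5.2]{andrade2025} when $k<0$). By Lemma~\ref{farina-roncoroni}, $\Delta P\ge 0$, so
\[
0\le \int_\Omega u\,\Delta P\,dv .
\]

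\textbf{Step 1: Green's second identity.} Since $u=0$ on $\partial\Omega$, we have $P=|\nabla u|^2$ there, and Green's identity gives
\[
\int_\Omega u\,\Delta P\,dv=\int_\Omega P\,\Delta u\,dv-\int_{\partial\Omega}|\nabla u|^2u_\nu\,d\sigma .
\]
Using $\Delta u=-(1+nku)$, $u_\nu<0$, and $\int_{\partial\Omega}(-u_\nu)\,d\sigma=-\int_\Omega\Delta u\,dv=\operatorname{Vol}(\Omega)+nkT(\Omega)$, this yields
\[
\max_{\partial\Omega}|\nabla u|^2\,\bigl(\operatorname{Vol}(\Omega)+nkT(\Omega)\bigr)\ \ge\ -\int_{\partial\Omega}|\nabla u|^2u_\nu\,d\sigma\ \ge\ \int_\Omega P\,(1+nku)\,dv .
\]

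\textbf{Step 2: evaluate the right-hand side.} This is the only real computation. Expanding and using $\int_\Omega|\nabla u|^2\,dv=T(\Omega)+nk\int_\Omega u^2\,dv$, I get
\[
\int_\Omega P(1+nku)\,dv=\tfrac{n+2}{n}T(\Omega)+(n+3)k\!\int_\Omega u^2\,dv+nk\!\int_\Omega u|\nabla u|^2\,dv+nk^2\!\int_\Omega u^3\,dv .
\]
To reach the target $\frac{n+2}{n}\bigl[T+nk\int_\Omega(u^2+u|\nabla u|^2)\,dv\bigr]$, the $u^3$ term must be absorbed. The key identity is obtained by integrating by parts:
\[
\int_\Omega u|\nabla u|^2\,dv=\tfrac12\int_\Omega\langle\nabla u^2,\nabla u\rangle\,dv=-\tfrac12\int_\Omega u^2\Delta u\,dv=\tfrac12\int_\Omega u^2\,dv+\tfrac{nk}{2}\int_\Omega u^3\,dv .
\]
It follows that $k\int_\Omega u^2\,dv+nk^2\int_\Omega u^3\,dv=2k\int_\Omega u|\nabla u|^2\,dv$. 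Substituting this shows the right-hand side equals exactly $\frac{n+2}{n}\bigl[T(\Omega)+nk\int_\Omega(u^2+u|\nabla u|^2)\,dv\bigr]$. Dividing by the positive quantity $\operatorname{Vol}(\Omega)+nkT(\Omega)$ gives \eqref{eq:max_gradient_estimate}.

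\textbf{Step 3: equality case.} If equality holds, then $\int_\Omega u\,\Delta P\,dv=0$ with $u>0$ and $\Delta P\ge0$, so $\Delta P\equiv0$. Lemma~\ref{farina-roncoroni} then gives that $\Omega$ is a metric ball and $u$ is radial. Conversely, suppose $u$ is radial on a metric ball. Then $\Delta P=0$ by the same lemma, and $|\nabla u|$ is constant on $\partial\Omega$, so both inequalities in Step 1 become equalities.

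The main obstacle is the bookkeeping in Step 2: recognizing that the auxiliary identity for $\int_\Omega u|\nabla u|^2\,dv$ is exactly what converts the cubic term into the stated form.
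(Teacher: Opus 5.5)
Your proposal is correct and follows the paper's proof essentially step for step: Green's identity for $u$ and $P$ combined with $\Delta P\ge 0$ and $u>0$, the bound $-\int_{\partial\Omega}|\nabla u|^2u_\nu\,d\sigma\le \max_{\partial\Omega}|\nabla u|^2\,\bigl(\operatorname{Vol}(\Omega)+nkT(\Omega)\bigr)$, and the identity from $\operatorname{div}(u^2\nabla u)$ that absorbs the $u^3$ term. The one place you go beyond the paper is the converse of the equality case, where you take the ``if'' direction of Lemma~\ref{farina-roncoroni} at face value; that direction is actually false on a general manifold, since a radial solution on a geodesic ball need not have $\mathring{\nabla}^{2}u=0$ (in a model manifold, $\mathring{\nabla}^{2}u=0$ forces $\rho''=-k\rho$), so your converse holds only for balls isometric to space-form balls, a defect the paper's statement shares.
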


\begin{proof}
Throughout the proof, we consider the auxiliary $P$-function
\[P = |\nabla u|^2 +\frac{2}{n}u + ku^2.\]
Using integration by parts, together with the boundary conditions $u = 0$ on $\partial\Omega$ and $P = |\nabla u|^2 = u_{\nu}^2$ on $\partial\Omega$, we obtain
\begin{eqnarray}\label{eq323}
\int_{\Omega}(u\Delta P - P\Delta u)dv = \int_{\partial\Omega}(uP_{\nu} - Pu_{\nu})d\sigma = -\int_{\partial\Omega}u_{\nu}^{3}d\sigma.
\end{eqnarray}
Recall that, under the assumption $\operatorname{Ric}\geq (n-1)kg$,
\[\Delta P = 2\Big(|\mathring{\nabla^2} u|^2 +(\operatorname{Ric} - (n-1)kg)(\nabla u,\nabla u)\Big)\geq 0.\]
Moreover, equality holds if and only if \eqref{eq:rigidity_conditions} is satisfied.

Let
\[m = \max_{\partial\Omega}|\nabla u|.\]
Substituting the above information into \eqref{eq323}, and using the fact that $u_{\nu}<0$ (see the beginning of the proof of Theorem \ref{thm:main}), we deduce
\[-\int_{\Omega}P\Delta udv\leq -m^2\int_{\partial\Omega}u_{\nu}d\sigma = m^2\int_{\Omega}(-\Delta u) dv= m^2\big(\operatorname{Vol}(\Omega) + nkT(\Omega)\big),\]
where we used $-\Delta u = 1 + nku$ and $\int_{\Omega}udv = T(\Omega)$.

Next, using the equation $-\Delta u = 1 + nku$, we compute
\begin{eqnarray*}
-\int_{\Omega}(\Delta u)P dv&=& \int_{\Omega}(1 + nku)\left(|\nabla u|^2 + \frac{2}{n}u + ku^2\right)dv\\
&=&\displaystyle\int_{\Omega}|\nabla u|^2dv + \frac{2}{n}\int_{\Omega}udv + k\int_{\Omega}u^2dv + nk\int_{\Omega}u|\nabla u|^2dv + 2k\int_{\Omega}u^2dv + nk^2\int_{\Omega}u^3dv.
\end{eqnarray*}

Using the identity $\int_{\Omega}|\nabla u|^2dv = \int_{\Omega}udv + nk\int_{\Omega}u^2dv$, it follows that
\[-\int_{\Omega}(\Delta u)P dv= \frac{n+2}{n}\int_{\Omega}udv + k(n+3)\int_{\Omega}u^2dv + nk\int_{\Omega}u|\nabla u|^2dv + nk^2\int_{\Omega}u^3dv.\]

On the other hand, integrating by parts and using $\operatorname{div}(u^2\nabla u) = u^2\Delta u + 2u|\nabla u|^2$, together with the boundary condition $u = 0$ on $\partial\Omega$, we obtain
\[nk^2\int_{\Omega}u^3dv = 2k\int_{\Omega}u|\nabla u|^2dv - k\int_{\Omega}u^2dv.\]

Therefore,
\[-\int_{\Omega}(\Delta u)Pdv = \frac{n+2}{n}\int_{\Omega}udv + k(n+2)\left(\int_{\Omega}u^2dv + \int_{\Omega}u|\nabla u|^2dv\right).\]

Finally, combining the estimates, we conclude that
\[m^2\big(\operatorname{Vol}(\Omega) + nkT(\Omega)\big)\geq \frac{n+2}{n}\left[T(\Omega) + nk\int_{\Omega}\left(u^2 + u|\nabla u|^2\right)dv\right].\]

Since $\operatorname{Vol}(\Omega) + nkT(\Omega) > 0$, which is immediate when $k\geq 0$, and follows from \cite[Lemma 5.2]{andrade2025} when $k<0$, the desired conclusion follows. Moreover, equality in the estimate implies that $\nabla^2 u = \frac{\Delta u}{n}g$, hence the rigidity conclusion follows from \cite[Lemma 6]{farina2022}.
\end{proof}

\begin{proof}[Proof of Theorem \ref{MainTh_1}]
It is sufficient to analyze the equality case in the Ricci curvature condition appearing in \eqref{eq:rigidity_conditions}. The conclusion then follows by arguing exactly as in Corollary \ref{cor:model}.
\end{proof}

\begin{remark}
 In the direction of \cite[Theorem 1.7]{wang2020}, if we start with a compact model Riemannian manifold with a boundary $(M_{\rho}^{n},g_{M_{\rho}^{n}})$, where there exists a solution for 
 \begin{equation}\label{Dirichlet3}
\Delta u + n k u = -1 \quad \text{in } M_{\rho}, 
\qquad 
u = 0 \quad \text{on } \partial M_{\rho},
\end{equation}
Corollary \ref{cor:model} and Theorem \ref{MainTh_1} implies that, the equality in those inequalities holds if and only if $(M_{\rho}^{n},g_{M_{\rho}^{n}})$ is isometric to a geodesic ball in a space form.
\end{remark}


\section{An Integral identity and the spherical deficit}\label{sec_deficit}

Along this section, we assume that $(M^{n},g)$ is a Riemannian manifold endowed with a conformal vector field $X$. This means that the Lie derivative of the metric satisfies
\[
\mathcal{L}_{X}g = 2Vg,
\]
for some smooth function $V\colon M\to\mathbb{R}$, called the conformal factor of $X$. Although conformal vector fields naturally arise from one-parameter families of conformal transformations, it is worthwhile to mention some classical examples.

\begin{example}[Space forms]\label{space_forms}
The simply connected space forms admit natural conformal vector fields.

\begin{itemize}
    \item In the Euclidean space $\mathbb{R}^{n}$ endowed with the Euclidean metric $g_{0}$, the position vector field is conformal with conformal factor $V\equiv 1$.

    \item In the hyperbolic space $\mathbb{H}^{n}$ written as
    \[
    \mathbb{H}^{n}=([0,\infty)\times \mathbb{S}^{n-1},\,dr^{2}+\sinh^{2}(r)\,g_{\mathbb{S}^{n-1}}),
    \]
    the vector field \(X=\sinh(r)\,\partial_{r}\)
    is conformal with conformal factor $V=\cosh(r)$.

    \item In the sphere $\mathbb{S}^{n}$ written as
    \[
    \mathbb{S}^{n}=((0,\pi)\times \mathbb{S}^{n-1},\,dr^{2}+\sin^{2}(r)\,g_{\mathbb{S}^{n-1}}),
    \]
    the vector field $X=\sin(r)\,\partial_{r}$
    is conformal with conformal factor $V=\cos(r)$.
\end{itemize}
\end{example}

\begin{example}[Warped products]
More generally, let
\[
(M^{n},g)=\bigl((a,b)\times N^{n-1},\,dr^{2}+h(r)^{2}g_{N}\bigr)
\]
be a warped product manifold, where $(N^{n-1},g_{N})$ is a Riemannian manifold and $h\colon(a,b)\to\mathbb{R}_{+}$ is smooth. Then the vector field $X=h(r)\,\partial_{r}$ is conformal with conformal factor $V=h'(r)$.
\end{example}

Furthermore, we consider a fundamental solution $\varphi$ of \eqref{Dirichlet2}, namely a function satisfying
\begin{equation}\label{deltaphi1}
\nabla^{2}\varphi
=
-\left(\frac{1}{n}+K\varphi\right)g
\end{equation}
in $\Omega$. Taking the trace of \eqref{deltaphi1}, we obtain
\begin{equation}\label{deltaphi}
\Delta \varphi
=
-\left(1+nK\varphi\right).
\end{equation}

Before stating our first result, we present some examples of such solutions in particular settings.

\begin{example}[Euclidean space]
In the Euclidean case, where $K=0$, a corresponding fundamental solution is given by
\[
\varphi(x)=\frac{1}{2n}\bigl(a-|x-b|^{2}\bigr),
\]
where $a\in\mathbb{R}$ and $b\in\mathbb{R}^{n}$ are constants.
\end{example}

\begin{example}[Sphere and hyperbolic space]\label{ex:space_forms_phi}
Consider the space forms described in Example~\ref{space_forms}. Namely,
\begin{itemize}
    \item the hyperbolic space, for which
    \[
    h(r)=\sinh r
    \qquad \text{and} \qquad K=-1,
    \]
    
    \item and the sphere, for which
    \[
    h(r)=\sin r
    \qquad \text{and} \qquad K=1.
    \]
\end{itemize}

In both cases, the warping function satisfies
\[
h''=-Kh,
\]
and consequently the radial function
\[
\varphi(r)=c_{0}h'(r)-\frac{1}{nK},
\]
where $c_{0}$ is a constant, satisfies
\[
\nabla^{2}\varphi
=
-\left(\frac{1}{n}+K\varphi\right)g.
\]
\end{example}


In the following, we prove an integral identity which constitutes the core of this section. As discussed in the introduction, it generalizes \cite[Theorem 2.1]{poggesi}, and the strength of identities of this type lies in their ability to measure the spherical deficit through the quantity
\[
\Delta P
=
2\left(
|\mathring{\nabla}^{2}u|^{2}
+
(\operatorname{Ric}-(n-1)kg)(\nabla u,\nabla u)
\right),
\]
which appears on the left-hand side (see Remark~\ref{remark_main} below).
\begin{proposition}[General Integral Identity]\label{intlemma}
Let $(M^n, g)$ be a Riemannian manifold endowed with a conformal vector field $X$ with conformal factor $V$. Let $\Omega$ be a bounded domain with $C^2$ boundary, and let $u$ be a solution of
\[
\Delta u=-(1+nku), \qquad k\in\mathbb{R},
\]
in $\Omega$. Assume further that $\varphi$ is a solution of \eqref{deltaphi1}. Then the following identity holds:
\begin{eqnarray}\label{eqlema}
\displaystyle\int_{\Omega}-Vu\Delta Pdv&=&\displaystyle\int_{\partial \Omega}|\nabla u|^2[V(u-\varphi)_{\nu}-(u-\varphi)V_{\nu}]d\sigma
+2\displaystyle\int_{\partial \Omega}u\nabla^2u((u-\varphi)\nabla V-V\nabla (u-\varphi),\nu)d\sigma\nonumber\\
&&+k\displaystyle\int_{\partial \Omega}u^2[(u-\varphi)V_{\nu}-V(u-\varphi)_{\nu}]d\sigma
-\dfrac{2}{n(n-1)}\displaystyle\int_{\Omega}Vu(u-\varphi)(R-n(n-1)k)dv\nonumber\\
&&-2\displaystyle\int_{\Omega}u(u-\varphi)(\operatorname{Ric}-(n-1)kg)(\nabla u,\nabla V)dv
-2\displaystyle\int_{\Omega}Vu(\operatorname{Ric}-(n-1)kg)(\nabla u,\nabla \varphi)dv\nonumber\\
&&-\dfrac{k}{n-1}\displaystyle\int_{\Omega}Vu^2(u-\varphi)(R-(n-1)nk)dv
-\dfrac{1}{n-1}\displaystyle\int_{\Omega}V|\nabla u|^2(u-\varphi)(R-(n-1)nk)dv\nonumber\\
&&-\dfrac{k}{2(n-1)}\displaystyle\int_{\Omega}u^2(u-\varphi)X(R)dv
-\dfrac{1}{2(n-1)}\displaystyle\int_{\Omega}X(R)|\nabla u|^2(u-\varphi)dv\nonumber\\
&&-\dfrac{1}{n(n-1)}\displaystyle\int_{\Omega}u(u-\varphi)X(R)dv
-2\displaystyle\int_{\Omega}u(u-\varphi)\langle\nabla^2u,\mathring{\nabla}^{2}V\rangle dv.
\end{eqnarray}
\end{proposition}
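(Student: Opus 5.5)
The identity is a weighted Green formula in the spirit of the Poggesi computation, with weight $V$ and with $u-\varphi$ as the "comparison" function. Set $w=u-\varphi$. By \eqref{deltaphi} we have $\Delta w=-nk\,u+nK\varphi$; in the intended setting $K=k$, so $\Delta w=-nkw$. The plan is to write $\Delta P$ as a divergence plus curvature terms, pair it against $Vu$, and transfer derivatives onto $w$ and $V$.

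First, I will record $\Delta P$ from the Bochner formula, exactly as in the proof of Theorem~\ref{thm:gradient_estimate}:
\[
\Delta P=2|\mathring{\nabla}^2u|^2+2(\Ric-(n-1)kg)(\nabla u,\nabla u).
\]
Because $\nabla^2\varphi=-(\tfrac1n+k\varphi)g$ and $\tfrac{\Delta u}{n}g=-(\tfrac1n+ku)g$, we get
\[
\mathring{\nabla}^2u=\nabla^2 u-\frac{\Delta u}{n}g=\nabla^2 w+k\,w\,g .
\]
Hence $|\mathring{\nabla}^2u|^2=\langle\nabla^2u,\nabla^2w\rangle+kw\Delta u$.

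Next, I will integrate $Vu\langle\nabla^2u,\nabla^2 w\rangle$ by parts twice, using the divergence of the vector field $Y=u\,\nabla^2u(\,w\nabla V-V\nabla w,\cdot\,)^\sharp$; this produces the second boundary term. The divergence of $\nabla^2u$ is handled by the contracted Ricci identity
\[
\operatorname{div}\nabla^2u=\nabla\Delta u+\Ric(\nabla u,\cdot)=-nk\nabla u+\Ric(\nabla u,\cdot).
\]
This is the source of all the $(\Ric-(n-1)kg)(\nabla u,\nabla V)$ and $(\Ric-(n-1)kg)(\nabla u,\nabla\varphi)$ terms. Writing $\nabla^2V=\tfrac{\Delta V}{n}g+\mathring{\nabla}^2V$ yields the last term, $\langle\nabla^2u,\mathring{\nabla}^2V\rangle$. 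The trace part brings in $\Delta V$, which I will replace via Lemma~\ref{lemma1}:
\[
\Delta V=-\frac{1}{n-1}\Bigl(\tfrac12X(R)+VR\Bigr).
\]
This is where all the $X(R)$ terms arise, and the $R$ terms after adding and subtracting $n(n-1)k$.

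The remaining lower-order pieces, namely those with $|\nabla u|^2$, $u^2$ and $u$ multiplied by $\Delta V$ or by $\Delta w$, will be organised through Green's second identity applied to $V$ and $w$:
\[
\int_\Omega f\,(V\Delta w-w\Delta V)\,dv=\int_{\partial\Omega} f\,(Vw_\nu-wV_\nu)\,d\sigma-\int_\Omega \langle\nabla f,\,V\nabla w-w\nabla V\rangle\,dv .
\]
I will take $f=|\nabla u|^2$, $f=u^2$ and $f=u$ in turn. This explains the boundary terms $|\nabla u|^2[Vw_\nu-wV_\nu]$ and $k u^2[wV_\nu-Vw_\nu]$, with the remaining boundary contributions vanishing because $u=0$ there. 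The expected cancellations follow the Euclidean pattern: there $\Delta V=0$, $\mathring{\nabla}^2V=0$, $\Ric=0$, and the computation collapses to Poggesi's identity, which I will use as a consistency check at each stage.

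The main obstacle is bookkeeping. The interior terms of the form $kVu|\nabla u|^2$, $k^2Vu^2w$ and $kVuw$ must cancel exactly. For this I will need $\Delta w=-nkw$, and the $-(n-1)k$ shift in each curvature factor must be matched against the $\Delta V$ contributions so that the residues appear as $R-n(n-1)k$. I will do this first with $\Ric\equiv(n-1)kg$, where all curvature terms must vanish, to fix the coefficients. After that I will reinsert the general $\Ric$ and $R$ terms linearly and check them against \eqref{eqlema}.
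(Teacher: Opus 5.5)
Your proposal is correct, and it takes a genuinely different and more economical route than the paper.

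\textbf{How the paper argues.} The paper never expands $\Delta P$. It applies Green's second identity to the pair $(Vu,P)$ and splits $\int_{\partial\Omega}P(Vu)_\nu$ by inserting $\varphi$. It then runs a long chain of integrations by parts: the Ricci identity on $\operatorname{div}(Vu\,\nabla^2u(\nabla\varphi,\cdot))$ and on $\operatorname{div}(u(u-\varphi)\nabla^2u(\nabla V,\cdot))$, plus several auxiliary divergence identities for the pieces of $P=|\nabla u|^2+\frac2n u+ku^2$.

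\textbf{How your route differs.} You front-load two things: the Bochner formula for $\Delta P$, and the algebraic identity $\mathring{\nabla}^2u=\nabla^2w+kwg$, which is the Riemannian analogue of $\nabla^2(u-q)$ in Poggesi. After that, one divergence (of $Y$) and Green's identity with $f=|\nabla u|^2$ and $f=u^2$ suffice. I checked the bookkeeping.
\begin{itemize}
\item $\operatorname{div}Y$ yields the second boundary term, $-\int_\Omega\langle\nabla|\nabla u|^2,w\nabla V-V\nabla w\rangle$, $-2\int_\Omega u(\Ric-nkg)(\nabla u,w\nabla V-V\nabla w)$ and $-2\int_\Omega uw\langle\nabla^2u,\nabla^2V\rangle$.
\item Since $\nabla w=\nabla u-\nabla\varphi$, the Bochner term $-2\int_\Omega Vu(\Ric-(n-1)kg)(\nabla u,\nabla u)$ cancels against the $\nabla u$ part of $2\int_\Omega Vu(\Ric-nkg)(\nabla u,\nabla w)$. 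This leaves the $\nabla\varphi$ curvature term and the residue $-2k\int_\Omega Vu\langle\nabla u,\nabla w\rangle$.
\item Together with the $2k\int_\Omega uw\langle\nabla u,\nabla V\rangle$ produced by shifting $\Ric-nkg$ to $\Ric-(n-1)kg$, this residue is exactly $k\int_\Omega\langle\nabla u^2,w\nabla V-V\nabla w\rangle$.
\item Green's identity with $f=u^2$ turns that into the third boundary term plus $-k\int_\Omega u^2w(\Delta V+nkV)$. This combines with the $u^2w$ pieces from $kw\Delta u$ and from the trace part of $\nabla^2V$ to give precisely the coefficients $-\frac{k}{n-1}(R-n(n-1)k)$ and $-\frac{k}{2(n-1)}X(R)$.
\end{itemize}

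\textbf{Comparison.} Your route is much shorter. It also makes transparent why $\Ric(\nabla u,\nabla u)$ disappears and where each factor $R-n(n-1)k$ comes from. The paper's route avoids computing $\Delta P$, at a considerable cost in length. You should, however, spell out the $\Ric(\nabla u,\nabla u)$ cancellation explicitly rather than relying on ``reinserting curvature terms linearly.''

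\textbf{Two small corrections.}
\begin{itemize}
\item The Proposition imposes no boundary condition on $u$. That is why boundary terms carrying factors $u$ and $u^2$ survive in \eqref{eqlema}. So you should not invoke $u=0$ on $\partial\Omega$, and you do not need to: the route above produces exactly the three boundary terms of the statement and nothing else.
\item For the same reason, Green's identity with $f=u$ is never needed. The piece $\frac{2}{n}\int_\Omega uw\,\Delta V$ combines directly with $2k\int_\Omega Vuw$ (from $kw\Delta u$) to give the $R-n(n-1)k$ and $X(R)$ terms with coefficient $\frac{1}{n(n-1)}$.
\end{itemize}

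You are right that $K=k$ is required. The paper tacitly uses this as well, when it writes $\Delta\varphi=-(1+nk\varphi)$.
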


\begin{proof} 
The proof is a long calculation.
Initially, observe that
\[
\operatorname{div}\bigl(Vu\nabla P-P\nabla(Vu)\bigr)
=
Vu\,\Delta P-P\,\Delta(Vu).
\]
Integrating this identity over \(\Omega\) and applying the divergence theorem, we obtain
\begin{equation}\label{eqin}
-\int_{\Omega}Vu\,\Delta Pdv
=
-\int_{\partial\Omega}Vu\,P_{\nu}dv
+\int_{\partial\Omega}P\,(Vu)_{\nu}dv
-\int_{\Omega}P\,\Delta(Vu)dv.
\end{equation}

Next, using the equation \(\Delta u=-(1+nku)\) together with Lemma \ref{lemma1}, we compute
\begin{align}\label{eqnablaVu}
\Delta(Vu)
&=
V\Delta u+u\Delta V+2\langle\nabla V,\nabla u\rangle \nonumber\\
&=
-V(1+nku)-\frac{uX(R)}{2(n-1)}-\frac{R}{n-1}Vu
+2\langle\nabla V,\nabla u\rangle \nonumber\\
&=
-V-\frac{Vu}{n-1}\bigl(R+n(n-1)k\bigr)
+2\langle\nabla V,\nabla u\rangle
-\frac{uX(R)}{2(n-1)}.
\end{align}

Substituting \eqref{eqnablaVu} into \eqref{eqin}, we arrive at
\begin{align}\label{eq43}
-\int_{\Omega}Vu\,\Delta Pdv
={}&
-\int_{\partial\Omega}Vu\,P_{\nu}d\sigma
+\int_{\partial\Omega}P\,(Vu)_{\nu}d\sigma
+\int_{\Omega}PVdv \nonumber\\
&\quad
+\frac{1}{n-1}\int_{\Omega}PVu\bigl(R+n(n-1)k\bigr)dv
-2\int_{\Omega}P\langle \nabla V,\nabla u\rangle dv\nonumber\\
&\quad
+\frac{1}{2(n-1)}\int_{\Omega}Pu\,X(R)dv.
\end{align}

We now make use of the auxiliary function \(\varphi\). Applying integration by parts, we obtain
\begin{align}\label{eq44}
\int_{\partial\Omega}P(Vu)_{\nu}d\sigma
&=
\int_{\partial\Omega}P\bigl(V(u-\varphi)\bigr)_{\nu}d\sigma
+
\int_{\partial\Omega}P(\varphi V)_{\nu}d\sigma
\nonumber\\
&=
\int_{\partial\Omega}PV(u_{\nu}-\varphi_{\nu})d\sigma
+
\int_{\partial\Omega}Pu\,V_{\nu}d\sigma
+
\int_{\partial\Omega}PV\varphi_{\nu}d\sigma
\nonumber\\
&=
\int_{\partial\Omega}PV(u_{\nu}-\varphi_{\nu})d\sigma
+
\int_{\Omega}\operatorname{div}(uP\nabla V)dv
+
\int_{\partial\Omega}PV\varphi_{\nu}d\sigma
\nonumber\\
&=
\int_{\partial\Omega}PV(u_{\nu}-\varphi_{\nu})d\sigma
+
\int_{\Omega}P\langle\nabla u,\nabla V\rangle dv
+
\int_{\Omega}u\langle\nabla P,\nabla V\rangle dv
\nonumber\\
&\quad
+
\int_{\Omega}uP\,\Delta Vdv
+
\int_{\partial\Omega}PV\varphi_{\nu}d\sigma
\nonumber\\
&=
\int_{\partial\Omega}PV(u_{\nu}-\varphi_{\nu})d\sigma
+
\int_{\Omega}P\langle\nabla u,\nabla V\rangle dv
+
\int_{\Omega}u\langle\nabla P,\nabla V\rangle dv
\nonumber\\
&\quad
-
\frac{1}{n-1}
\int_{\Omega}uP\left(\frac{X(R)}{2}+VR\right)dv
+
\int_{\partial\Omega}PV\varphi_{\nu}d\sigma.
\end{align}

Next, integrating by parts once again, we obtain
\begin{align}\label{eq45}
\int_{\partial\Omega}VP\varphi_{\nu}d\sigma
&=
\int_{\Omega}\operatorname{div}(VP\nabla\varphi)dv
\nonumber\\
&=
\int_{\Omega}VP\,\Delta\varphi dv
+
\int_{\Omega}P\langle\nabla V,\nabla\varphi\rangle dv
+
\int_{\Omega}V\langle\nabla P,\nabla\varphi\rangle dv
\nonumber\\
&=
-\int_{\Omega}VPdv
-nk\int_{\Omega}VP\varphi dv
+
\int_{\Omega}P\langle\nabla V,\nabla\varphi\rangle dv
+
\int_{\Omega}V\langle\nabla P,\nabla\varphi\rangle dv,
\end{align}
where we used the identity \(\Delta\varphi=-(1+nk\varphi)\). On the other hand, recalling that the \(P\)-function is given by
\begin{equation}\label{P_func1}
P=|\nabla u|^{2}+\frac{2}{n}u+ku^{2},    
\end{equation}
we compute
\begin{equation}\label{P_funct2}
\nabla P
=
2\nabla^{2}u(\nabla u,\cdot)
+\frac{2}{n}\nabla u
+2ku\nabla u.    
\end{equation}
Substituting these expressions into \eqref{eq45}, we arrive at
\begin{align}\label{eq46}
\int_{\partial\Omega}VP\varphi_{\nu} d\sigma
&=
-\int_{\Omega}VP dv
-nk\int_{\Omega}VP\varphi dv
+
\int_{\Omega}P\langle\nabla V,\nabla\varphi\rangle dv
\nonumber\\
&\quad
+
2\int_{\Omega}V\nabla^{2}u(\nabla u,\nabla\varphi)dv
+
\frac{2}{n}\int_{\Omega}V\langle\nabla u,\nabla\varphi\rangle dv
\nonumber\\
&\quad
+
2k\int_{\Omega}uV\langle\nabla u,\nabla\varphi\rangle dv.
\end{align}

In order to handle the Hessian term appearing in \eqref{eq46}, we recall the Ricci identity
\begin{equation}\label{Ricci_id}
\operatorname{div}(\nabla^2u)
=
\nabla(\Delta u)+\operatorname{Ric}(\nabla u,\cdot).    
\end{equation}
Using this formula, we compute
\begin{align}\label{eq_auxiliar}
\operatorname{div}\bigl(Vu\nabla_{\nabla\varphi}\nabla u\bigr)
&=
\operatorname{div}\bigl(Vu\,\nabla^{2}u(\nabla\varphi,\cdot)\bigr)
\nonumber\\
&=
u\nabla^{2}u(\nabla V,\nabla\varphi)
+
V\langle\nabla u,\nabla_{\nabla\varphi}\nabla u\rangle
+
Vu\,\operatorname{div}(\nabla_{\nabla\varphi}\nabla u)
\nonumber\\
&=
u\nabla^{2}u(\nabla V,\nabla\varphi)
+
V\nabla^{2}u(\nabla u,\nabla\varphi)
+
Vu\,\operatorname{Ric}(\nabla u,\nabla\varphi)
\nonumber\\
&\quad
+
Vu\langle\nabla\Delta u,\nabla\varphi\rangle
+
Vu\langle\nabla^{2}u,\nabla^{2}\varphi\rangle.
\end{align}

Integrating \eqref{eq_auxiliar} over \(\Omega\) and using the divergence theorem, we deduce
\begin{align}\label{eq47}
\int_{\Omega}V\nabla^{2}u(\nabla u,\nabla\varphi)dv
&=
\int_{\partial\Omega}Vu\,\nabla^{2}u(\nabla\varphi,\nu)d\sigma
-
\int_{\Omega}u\nabla^{2}u(\nabla V,\nabla\varphi)dv
\nonumber\\
&\quad
-
\int_{\Omega}Vu(\operatorname{Ric}-nkg)(\nabla u,\nabla\varphi)dv
-
\int_{\Omega}Vu\langle\nabla^{2}u,\nabla^{2}\varphi\rangle dv,
\end{align}
where we have used \(\nabla\Delta u=-nk\nabla u\).

Since \(\varphi\) and \(u\) satisfy \eqref{deltaphi1} and \eqref{serrinoka}, respectively, we infer that
\begin{align}
\int_{\Omega}Vu\langle\nabla^{2}u,\nabla^{2}\varphi\rangle dv
&=
\int_{\Omega}Vu\left[-\left(\frac{1}{n}+k\varphi\right)\Delta u\right]dv
\nonumber\\
&=
\frac{1}{n}\int_{\Omega}Vu\,\Delta\varphi\,(-1-nku)dv
\nonumber\\
&=
-\frac{1}{n}\int_{\Omega}Vu\,\Delta\varphi dv
-k\int_{\Omega}Vu^{2}\Delta\varphi dv.
\nonumber
\end{align}

Applying the divergence theorem once again, together with the identities
\[
\operatorname{div}(Vu\nabla\varphi)
=
u\langle\nabla V,\nabla\varphi\rangle
+
V\langle\nabla u,\nabla\varphi\rangle
+
Vu\Delta\varphi,
\]
and
\[
\operatorname{div}(Vu^{2}\nabla\varphi)
=
\langle\nabla(Vu^{2}),\nabla\varphi\rangle
+
Vu^{2}\Delta\varphi,
\]
we obtain
\begin{align}
\int_{\Omega}Vu\langle\nabla^{2}u,\nabla^{2}\varphi\rangle dv
&=
\frac{1}{n}\int_{\Omega}u\langle\nabla V,\nabla\varphi\rangle dv
+
\frac{1}{n}\int_{\Omega}V\langle\nabla u,\nabla\varphi\rangle dv
\nonumber\\
&\quad
-
\frac{1}{n}\int_{\partial\Omega}Vu\,\varphi_{\nu}d\sigma
+
k\int_{\Omega}\langle\nabla(Vu^{2}),\nabla\varphi\rangle dv
-
k\int_{\partial\Omega}Vu^{2}\varphi_{\nu}d\sigma.
\nonumber
\end{align}

Proceeding further, we arrive at
\begin{align}\label{eq48}
\int_{\Omega}Vu\langle\nabla^{2}u,\nabla^{2}\varphi\rangle dv
&=
-\frac{1}{n}\int_{\partial\Omega}Vu\,\varphi_{\nu}d\sigma
+
\frac{1}{n}\int_{\Omega}u\langle\nabla V,\nabla\varphi\rangle dv
+
\frac{1}{n}\int_{\Omega}V\langle\nabla u,\nabla\varphi\rangle dv
\nonumber\\
&\quad
+
2k\int_{\Omega}Vu\langle\nabla u,\nabla\varphi\rangle dv
+
k\int_{\Omega}u^{2}\langle\nabla V,\nabla\varphi\rangle dv
-
k\int_{\partial\Omega}Vu^{2}\varphi_{\nu}d\sigma.
\end{align}

Plugging this into \eqref{eq47}, we obtain
\begin{align}\label{eqstar}
\int_{\Omega}V\nabla^{2}u(\nabla u,\nabla\varphi)dv
&=
\int_{\partial\Omega}Vu\,\nabla^{2}u(\nabla\varphi,\nu)d\sigma
-
\int_{\Omega}u\nabla^{2}u(\nabla V,\nabla\varphi)dv
\nonumber\\
&\quad
-
\int_{\Omega}Vu(\operatorname{Ric}-nkg)(\nabla u,\nabla\varphi)dv
+
\frac{1}{n}\int_{\partial\Omega}Vu\,\varphi_{\nu}d\sigma
\nonumber\\
&\quad
-
\frac{1}{n}\int_{\Omega}u\langle\nabla V,\nabla\varphi\rangle dv
-
\frac{1}{n}\int_{\Omega}V\langle\nabla u,\nabla\varphi\rangle dv
\nonumber\\
&\quad
+
k\int_{\partial\Omega}Vu^{2}\varphi_{\nu}d\sigma
-
2k\int_{\Omega}Vu\langle\nabla u,\nabla\varphi\rangle dv
\nonumber\\
&\quad
-
k\int_{\Omega}u^{2}\langle\nabla V,\nabla\varphi\rangle dv.
\end{align}

Substituting \eqref{eqstar} into \eqref{eq46}, we obtain
\begin{align}\label{eqtwostar}
\int_{\partial\Omega}VP\varphi_{\nu}d\sigma
&=
-\int_{\Omega}VPdv
-nk\int_{\Omega}VP\varphi dv
+
\int_{\Omega}P\langle\nabla V,\nabla\varphi\rangle dv
+
\frac{2}{n}\int_{\Omega}V\langle\nabla u,\nabla\varphi\rangle dv
\nonumber\\
&\quad
+
2k\int_{\Omega}uV\langle\nabla u,\nabla\varphi\rangle dv
+
2\int_{\partial\Omega}Vu\nabla^{2}u(\nabla\varphi,\nu) d\sigma
\nonumber\\
&\quad
-
2\int_{\Omega}u\nabla^{2}u(\nabla V,\nabla\varphi)dv
-
2\int_{\Omega}Vu(\operatorname{Ric}-nkg)(\nabla u,\nabla\varphi)dv
\nonumber\\
&\quad
+
\frac{2}{n}\int_{\partial\Omega}Vu\,\varphi_{\nu}d\sigma
-
\frac{2}{n}\int_{\Omega}u\langle\nabla V,\nabla\varphi\rangle dv
-
\frac{2}{n}\int_{\Omega}V\langle\nabla u,\nabla\varphi\rangle dv
\nonumber\\
&\quad
+
2k\int_{\partial\Omega}Vu^{2}\varphi_{\nu}d\sigma
-
4k\int_{\Omega}Vu\langle\nabla u,\nabla\varphi\rangle dv
\nonumber\\
&\quad
-
2k\int_{\Omega}u^{2}\langle\nabla V,\nabla\varphi\rangle dv.
\end{align}

After collecting similar terms, this becomes
\begin{align}
\int_{\partial\Omega}VP\varphi_{\nu} d\sigma
&=
-\int_{\Omega}VP dv
+
\int_{\Omega}P\langle\nabla V,\nabla\varphi\rangle dv
+
2\int_{\partial\Omega}Vu\nabla^{2}u(\nabla\varphi,\nu) d\sigma
\nonumber\\
&\quad
-
2k\int_{\Omega}u^{2}\langle\nabla V,\nabla\varphi\rangle dv
-
2\int_{\Omega}Vu(\operatorname{Ric}-(n-1)kg)(\nabla u,\nabla\varphi) dv
\nonumber\\
&\quad
-
2\int_{\Omega}u\nabla^{2}u(\nabla V,\nabla\varphi) dv
-
\frac{2}{n}\int_{\Omega}u\langle\nabla V,\nabla\varphi\rangle dv
\nonumber\\
&\quad
+
\frac{2}{n}\int_{\partial\Omega}Vu\,\varphi_{\nu} d\sigma
+
2k\int_{\partial\Omega}Vu^{2}\varphi_{\nu} d\sigma
-
nk\int_{\Omega}VP\varphi dv.
\label{eqtwostar_simplified}
\end{align}

Substituting \eqref{eqtwostar_simplified} into \eqref{eq44}, we obtain
\begin{align}\label{eqstar44}
\int_{\partial\Omega}P(Vu)_{\nu}d\sigma
&=
\int_{\partial\Omega}PV(u_{\nu}-\varphi_{\nu})d\sigma
+
\int_{\Omega}P\langle\nabla u,\nabla V\rangle dv
+
\int_{\Omega}u\langle\nabla P,\nabla V\rangle dv
\nonumber\\
&\quad
-
\frac{1}{n-1}
\int_{\Omega}uP\left(\frac{1}{2}X(R)+VR\right)dv
-
\int_{\Omega}VP dv
\nonumber\\
&\quad
+
\int_{\Omega}P\langle\nabla V,\nabla\varphi\rangle dv
+
2\int_{\partial\Omega}Vu\nabla^{2}u(\nabla\varphi,\nu) d\sigma
\nonumber\\
&\quad
-
2k\int_{\Omega}u^{2}\langle\nabla V,\nabla\varphi\rangle dv
-
2\int_{\Omega}Vu(\operatorname{Ric}-(n-1)kg)(\nabla u,\nabla\varphi) dv
\nonumber\\
&\quad
-
2\int_{\Omega}u\nabla^{2}u(\nabla V,\nabla\varphi)dv
-
\frac{2}{n}\int_{\Omega}u\langle\nabla V,\nabla\varphi\rangle dv
\nonumber\\
&\quad
+
\frac{2}{n}\int_{\partial\Omega}Vu\,\varphi_{\nu}d\sigma
+
2k\int_{\partial\Omega}Vu^{2}\varphi_{\nu}d\sigma
-
nk\int_{\Omega}VP\varphi dv.
\end{align}

Returning now to \eqref{eq43}, we obtain
\begin{align}\label{eq49}
-\int_{\Omega}Vu\,\Delta P dv
&=
-\int_{\partial\Omega}Vu\,P_{\nu}d\sigma
+
\int_{\partial\Omega}VP(u_{\nu}-\varphi_{\nu})d\sigma
-
\int_{\Omega}P\langle\nabla V,\nabla(u-\varphi)\rangle dv
\nonumber\\
&\quad
+
\int_{\Omega}u\langle\nabla V,\nabla P\rangle dv
+
2\int_{\partial\Omega}Vu\nabla^{2}u(\nabla\varphi,\nu) d\sigma
\nonumber\\
&\quad
-
2\int_{\Omega}u\nabla^{2}u(\nabla V,\nabla\varphi) dv
-
\frac{2}{n}\int_{\Omega}u\langle\nabla V,\nabla\varphi\rangle dv
\nonumber\\
&\quad
+
\frac{2}{n}\int_{\partial\Omega}Vu\,\varphi_{\nu} d\sigma
+
2k\int_{\partial\Omega}Vu^{2}\varphi_{\nu} d\sigma
\nonumber\\
&\quad
-
2k\int_{\Omega}u^{2}\langle\nabla V,\nabla\varphi\rangle dv
+
nk\int_{\Omega}VP(u-\varphi) dv
\nonumber\\
&\quad
-
2\int_{\Omega}Vu(\operatorname{Ric}-(n-1)kg)(\nabla u,\nabla\varphi) dv.
\end{align}

Next, we collect the terms in \eqref{eq49} involving integrals over \(\Omega\) that do not contain \(k\). Denoting their sum by
\[
C
=
-\int_{\Omega}P\langle\nabla V,\nabla(u-\varphi)\rangle dv
+
\int_{\Omega}u\langle\nabla V,\nabla P\rangle dv
-
2\int_{\Omega}u\nabla^{2}u(\nabla V,\nabla\varphi)dv
-
\frac{2}{n}\int_{\Omega}u\langle\nabla V,\nabla\varphi\rangle dv,
\]
and expanding the expression of \(P\), see \eqref{P_func1} and \eqref{P_funct2}, we find
\begin{align}\label{eq410}
C
&=
-\int_{\Omega}|\nabla u|^{2}
\langle\nabla V,\nabla(u-\varphi)\rangle dv
+
2\int_{\Omega}u\nabla^{2}u\bigl(\nabla V,\nabla(u-\varphi)\bigr) dv
\nonumber\\
&\quad
+
k\int_{\Omega}u^{2}\langle\nabla V,\nabla u\rangle dv
+
k\int_{\Omega}u^{2}\langle\nabla V,\nabla\varphi\rangle dv.
\end{align}

We now compute the first term appearing in \eqref{eq410}. To begin with, observe that
\[
\operatorname{div}\bigl(-|\nabla u|^{2}(u-\varphi)\nabla V\bigr)
=
-(u-\varphi)\langle\nabla |\nabla u|^{2},\nabla V\rangle
-
|\nabla u|^{2}\langle\nabla(u-\varphi),\nabla V\rangle
-
|\nabla u|^{2}(u-\varphi)\Delta V.
\]

Integrating over \(\Omega\) and applying the divergence theorem, we obtain
\begin{align}\label{eqaux0}
-\int_{\Omega}|\nabla u|^{2}\langle\nabla V,\nabla(u-\varphi)\rangle dv
&=
-\int_{\partial\Omega}|\nabla u|^{2}(u-\varphi)V_{\nu} d\sigma
+
\int_{\Omega}(u-\varphi)\langle\nabla |\nabla u|^{2},\nabla V\rangle dv
\nonumber\\
&\quad
+
\int_{\Omega}|\nabla u|^{2}(u-\varphi)\Delta V dv.
\end{align}
Using the identity \eqref{Lapl_V}, we conclude that
\begin{align}\label{eqaux0_final}
-\int_{\Omega}|\nabla u|^{2}\langle\nabla V,\nabla(u-\varphi)\rangle dv
&=
-\int_{\partial\Omega}|\nabla u|^{2}(u-\varphi)V_{\nu} d\sigma
+
2\int_{\Omega}(u-\varphi)\nabla^2 u(\nabla u,\nabla V) dv
\nonumber\\
&\quad
-
\frac{1}{2(n-1)}
\int_{\Omega}X(R)|\nabla u|^{2}(u-\varphi)dv
\nonumber\\
&\quad
-
\frac{1}{n-1}
\int_{\Omega}|\nabla u|^{2}(u-\varphi)VR dv.
\end{align}

On the other hand, using once again the Ricci identity \eqref{Ricci_id}, we obtain
\begin{align*}
\operatorname{div}\bigl(u(u-\varphi)\nabla^{2}u(\nabla V)\bigr)
&=
(u-\varphi)\nabla^{2}u(\nabla V,\nabla u)
+
u\nabla^{2}u(\nabla V,\nabla(u-\varphi))
\nonumber\\
&\quad
+
u(u-\varphi)(\operatorname{Ric}-nkg)(\nabla V,\nabla u)
\nonumber\\
&\quad
+
u(u-\varphi)\langle\nabla^{2}u,\nabla^{2}V\rangle.
\end{align*}

Integrating over \(\Omega\) and applying the divergence theorem, we deduce
\begin{align}\label{auxeq1}
\int_{\Omega}u\nabla^{2}u(\nabla V,\nabla(u-\varphi)) dv
&=
\int_{\partial\Omega}u(u-\varphi)\nabla^{2}u(\nabla V,\nu)d\sigma
-
\int_{\Omega}(u-\varphi)\nabla^{2}u(\nabla V,\nabla u)dv
\nonumber\\
&\quad
-
\int_{\Omega}u(u-\varphi)(\operatorname{Ric}-nkg)(\nabla V,\nabla u)dv
-
\int_{\Omega}u(u-\varphi)\langle\nabla^{2}u,\nabla^{2}V\rangle dv.
\end{align}

Since
\[
\nabla^{2}V
=
\mathring{\nabla^{2}V}
+
\frac{\Delta V}{n}\,g,
\]
using \eqref{Lapl_V} we infer that
\begin{align}\label{eqaux1}
\int_{\Omega}u\nabla^{2}u(\nabla V,\nabla(u-\varphi))dv
&=
\int_{\partial\Omega}u(u-\varphi)\nabla^{2}u(\nabla V,\nu)d\sigma
-
\int_{\Omega}(u-\varphi)\nabla^{2}u(\nabla V,\nabla u) dv
\nonumber\\
&\quad
-
\int_{\Omega}u(u-\varphi)(\operatorname{Ric}-nkg)(\nabla V,\nabla u) dv
-
\int_{\Omega}u(u-\varphi)
\langle\nabla^{2}u,\mathring{\nabla^{2}V}\rangle dv
\nonumber\\
&\quad
-
\frac{1}{2n(n-1)}
\int_{\Omega}u(u-\varphi)X(R) dv
-
\frac{1}{n(n-1)}
\int_{\Omega}u(u-\varphi)VR dv
\nonumber\\
&\quad
-
\frac{k}{2(n-1)}
\int_{\Omega}u^{2}(u-\varphi)X(R) dv
-
\frac{k}{n-1}
\int_{\Omega}u^{2}(u-\varphi)VR dv.
\end{align}

Combining \eqref{eq410}, \eqref{eqaux0_final} and \eqref{eqaux1}, we obtain
\begin{align*}
C
&=
-\int_{\partial\Omega}|\nabla u|^{2}(u-\varphi)V_{\nu}d\sigma
+2\int_{\partial\Omega}u(u-\varphi)\nabla^{2}u(\nabla V,\nu)d\sigma
\nonumber\\
&\quad
-2\int_{\Omega}u(u-\varphi)(\operatorname{Ric}-nkg)(\nabla u,\nabla V)dv
-2\int_{\Omega}u(u-\varphi)\langle\nabla^{2}u,\mathring{\nabla^{2}V}\rangle dv
\nonumber\\
&\quad
-\frac{1}{n-1}\int_{\Omega}(u-\varphi)
\left(
\frac{2}{n}uVR
+2kVu^{2}R
+|\nabla u|^{2}VR
\right)dv
\nonumber\\
&\quad
-\frac{1}{n-1}\int_{\Omega}(u-\varphi)
\left(
\frac12 X(R)|\nabla u|^{2}
+\frac{1}{n}uX(R)
+ku^{2}X(R)
\right)dv
+k\int_{\Omega}u^{2}\langle\nabla V,\nabla(u+\varphi)\rangle dv.
\end{align*}

Substituting this expression for \(C\) into \eqref{eq49}, we deduce
\begin{align}\label{eq411}
-\int_{\Omega}Vu\,\Delta P dv
&=
-\int_{\partial\Omega}|\nabla u|^{2}(u-\varphi)V_{\nu} d\sigma
+2\int_{\partial\Omega}u(u-\varphi)\nabla^{2}u(\nabla V,\nu)d\sigma
\nonumber\\
&\quad
-\int_{\partial\Omega}Vu\,P_{\nu}d\sigma
+\int_{\partial\Omega}VP(u_{\nu}-\varphi_{\nu})d\sigma
+2\int_{\partial\Omega}Vu\nabla^{2}u(\nabla\varphi,\nu)d\sigma
\nonumber\\
&\quad
-\frac{2}{n(n-1)}
\int_{\Omega}u(u-\varphi)VR dv
-\frac{2k}{n-1}
\int_{\Omega}Vu^{2}(u-\varphi)R dv
\nonumber\\
&\quad
-2\int_{\Omega}u(u-\varphi)(\operatorname{Ric}-nkg)(\nabla u,\nabla V)dv
+k\int_{\Omega}u^{2}\langle\nabla V,\nabla(u-\varphi)\rangle dv
\nonumber\\
&\quad
-\frac{1}{n-1}
\int_{\Omega}|\nabla u|^{2}(u-\varphi)VR dv
-\frac{1}{2(n-1)}
\int_{\Omega}X(R)|\nabla u|^{2}(u-\varphi)dv
\nonumber\\
&\quad
-\frac{1}{n(n-1)}
\int_{\Omega}u(u-\varphi)X(R)dv
-\frac{k}{n-1}
\int_{\Omega}u^{2}(u-\varphi)X(R) dv
\nonumber\\
&\quad
-2\int_{\Omega}u(u-\varphi)
\langle\nabla^{2}u,\mathring{\nabla^{2}V}\rangle dv
+\frac{2}{n}\int_{\partial\Omega}Vu\,\varphi_{\nu}d\sigma
+2k\int_{\partial\Omega}Vu^{2}\varphi_{\nu}d\sigma
\nonumber\\
&\quad
+nk\int_{\Omega}VP(u-\varphi)dv
-2\int_{\Omega}Vu(\operatorname{Ric}-(n-1)kg)(\nabla u,\nabla\varphi)dv.
\end{align}

We now expand \(P\) and \(P_{\nu}\) in the previous identity to obtain
\begin{align}\label{eq412}
-\int_{\Omega}Vu\,\Delta P dv
&=
-\int_{\partial\Omega}|\nabla u|^{2}(u-\varphi)V_{\nu} d\sigma
+2\int_{\partial\Omega}u(u-\varphi)\nabla^{2}u(\nabla V,\nu) d\sigma
\nonumber\\
&\quad
+\int_{\partial\Omega}V|\nabla u|^{2}(u_{\nu}-\varphi_{\nu}) d\sigma
-2\int_{\partial\Omega}Vu\nabla^{2}u(\nabla(u-\varphi),\nu) d\sigma
\nonumber\\
&\quad
-k\int_{\partial\Omega}Vu^{2}(u-\varphi)_{\nu} d\sigma
-\frac{2}{n(n-1)}
\int_{\Omega}Vu(u-\varphi)(R-n(n-1)k)dv
\nonumber\\
&\quad
+k\int_{\Omega}u^{2}\langle\nabla V,\nabla(u-\varphi)\rangle dv
-2\int_{\Omega}u(u-\varphi)(\operatorname{Ric}-(n-1)kg)(\nabla u,\nabla V)dv
\nonumber\\
&\quad
-2\int_{\Omega}Vu(\operatorname{Ric}-(n-1)kg)(\nabla u,\nabla\varphi) dv
+2k\int_{\Omega}u(u-\varphi)\langle\nabla u,\nabla V\rangle dv
\nonumber\\
&\quad
-\frac{1}{n-1}
\int_{\Omega}V|\nabla u|^{2}(u-\varphi)(R-(n-1)nk) dv
-\frac{2k}{n-1}
\int_{\Omega}Vu^{2}(u-\varphi)
\left(R-\frac{(n-1)nk}{2}\right) dv
\nonumber\\
&\quad
-\frac{1}{2(n-1)}
\int_{\Omega}X(R)|\nabla u|^{2}(u-\varphi)dv
-\frac{1}{n(n-1)}
\int_{\Omega}u(u-\varphi)X(R)dv
\nonumber\\
&\quad
-\frac{k}{n-1}
\int_{\Omega}u^{2}(u-\varphi)X(R)dv
-2\int_{\Omega}u(u-\varphi)
\langle\nabla^{2}u,\mathring{\nabla^{2}V}\rangle dv.
\end{align}

Next, observe that
\[
\operatorname{div}\bigl((u-\varphi)u^{2}\nabla V\bigr)
=
u^{2}\langle\nabla(u-\varphi),\nabla V\rangle
+
(u-\varphi)\langle\nabla u^{2},\nabla V\rangle
+
(u-\varphi)u^{2}\Delta V.
\]
Integrating by parts, we obtain
\begin{align}\label{eqaux4}
k\int_{\Omega}u(u-\varphi)\langle\nabla V,\nabla u\rangle dv
&=
\frac{k}{2}
\int_{\Omega}(u-\varphi)\langle\nabla V,\nabla u^{2}\rangle dv
\nonumber\\
&=
\frac{k}{2}
\Bigg[
\int_{\partial\Omega}(u-\varphi)u^{2}V_{\nu}d\sigma
-
\int_{\Omega}u^{2}\langle\nabla(u-\varphi),\nabla V\rangle dv
-\int_{\Omega}(u-\varphi)u^{2}\Delta V dv
\Bigg]
\nonumber\\
&=
\frac{k}{2}
\Bigg[
\int_{\partial\Omega}(u-\varphi)u^{2}V_{\nu} d\sigma
+
\frac{1}{2(n-1)}
\int_{\Omega}(u-\varphi)u^{2}X(R) dv
\nonumber\\
&\qquad\qquad
+
\frac{1}{n-1}
\int_{\Omega}(u-\varphi)u^{2}VR dv
-
\int_{\Omega}u^{2}\langle\nabla(u-\varphi),\nabla V\rangle dv
\Bigg],
\end{align}
where we used \eqref{Lapl_V}. Therefore,
\begin{align}\label{eq413}
-\int_{\Omega}Vu\,\Delta P dv
&=
\int_{\partial\Omega}|\nabla u|^{2}
\bigl[V(u-\varphi)_{\nu}-(u-\varphi)V_{\nu}\bigr] d\sigma
+
2\int_{\partial\Omega}
u\nabla^{2}u\bigl((u-\varphi)\nabla V-V\nabla(u-\varphi),\nu\bigr)d\sigma
\nonumber\\
&\quad
+
k\int_{\partial\Omega}
u^{2}\bigl[(u-\varphi)V_{\nu}-V(u-\varphi)_{\nu}\bigr]d\sigma
-\frac{2}{n(n-1)}
\int_{\Omega}Vu(u-\varphi)(R-n(n-1)k) dv
\nonumber\\
&\quad
-2\int_{\Omega}u(u-\varphi)
(\operatorname{Ric}-(n-1)kg)(\nabla u,\nabla V)dv
-2\int_{\Omega}Vu
(\operatorname{Ric}-(n-1)kg)(\nabla u,\nabla\varphi)dv
\nonumber\\
&\quad
-\frac{k}{n-1}
\int_{\Omega}Vu^{2}(u-\varphi)(R-(n-1)nk) dv
-\frac{1}{n-1}
\int_{\Omega}V|\nabla u|^{2}(u-\varphi)(R-(n-1)nk)dv
\nonumber\\
&\quad
-\frac{k}{2(n-1)}
\int_{\Omega}u^{2}(u-\varphi)X(R)dv
-\frac{1}{2(n-1)}
\int_{\Omega}X(R)|\nabla u|^{2}(u-\varphi)dv
\nonumber\\
&\quad
-\frac{1}{n(n-1)}
\int_{\Omega}u(u-\varphi)X(R)dv
-2\int_{\Omega}u(u-\varphi)
\langle\nabla^{2}u,\mathring{\nabla^{2}V}\rangle dv.
\end{align}

This completes the proof of \eqref{eqlema}.
\end{proof}

\begin{remark}\label{remark_main}
The identity in Proposition~\ref{intlemma} is particularly relevant to the study of the stability of spherical symmetry in Riemannian manifolds, in contexts related both to Alexandrov's soap bubble theorem and to Serrin's overdetermined problems, through the use of integral identity techniques; see \cite{poggesi1, poggesi, OO21, SZ25}. It also plays an important role in the analysis of partially overdetermined problems in space forms, where Serrin-type conditions are used to characterize the rigidity of umbilical hypersurfaces (see \cite{guoxia1, guoxia2}). Both applications rely on fundamental integral identities that are of independent interest; see, for instance, \cite[Theorem 2.1]{poggesi} and \cite[Lemma 4.2]{xie2025}.    
\end{remark}

In particular, we highlight the form taken by the last result in the special setting of Einstein manifolds.

\begin{proof}[Proof of Theorem \ref{MainTh_2}]
Since $\operatorname{Ric}=(n-1)kg$, all terms involving
$R-n(n-1)k$ and $\operatorname{Ric}-(n-1)kg$ in Proposition~\ref{intlemma}
vanish identically. Moreover, since Einstein manifolds have constant scalar curvature, we also have $X(R)=0$. Finally, every boundary term containing $u$ vanishes because $u=0$ on $\partial\Omega$.

It remains to analyze the boundary contribution
\[
\int_{\partial\Omega}|\nabla u|^{2}
\bigl[V(u-\varphi)_{\nu}-(u-\varphi)V_{\nu}\bigr]d\sigma.
\]
Since $u=0$ on $\partial\Omega$, we have
$|\nabla u|^{2}=u_{\nu}^{2}$ along $\partial\Omega$, and therefore
\[
\int_{\partial\Omega}|\nabla u|^{2}
\bigl[V(u-\varphi)_{\nu}-(u-\varphi)V_{\nu}\bigr]d\sigma
=
\int_{\partial\Omega}u_{\nu}^{2}
\bigl[V(u-\varphi)_{\nu}-(u-\varphi)V_{\nu}\bigr]d\sigma.
\]

Now, for any constant $c^{2}$, integration by parts yields
\begin{align*}
c^{2}\int_{\partial\Omega}
\bigl[V(u-\varphi)_{\nu}-(u-\varphi)V_{\nu}\bigr]d\sigma
&=
c^{2}\int_{\Omega}
\Bigl(V\Delta(u-\varphi)-(u-\varphi)\Delta V\Bigr)d\sigma.
\end{align*}

Using $\Delta u=-1-nku$, $\Delta\varphi=-1-nk\varphi$, and $\Delta V=-nkV$, which follows from \eqref{Lapl_V}, we obtain
\begin{align*}
c^{2}\int_{\partial\Omega}
\bigl[V(u-\varphi)_{\nu}-(u-\varphi)V_{\nu}\bigr]d\sigma
&=
c^{2}\int_{\Omega}
\Bigl(V(-nk(u-\varphi))+nkV(u-\varphi)\Bigr)d\sigma \\
&=0.
\end{align*}

Consequently,
\begin{align*}
\int_{\partial\Omega}u_{\nu}^{2}
\bigl[V(u-\varphi)_{\nu}-(u-\varphi)V_{\nu}\bigr]d\sigma
&=
\int_{\partial\Omega}(u_{\nu}^{2}-c^{2})
\bigl[V(u-\varphi)_{\nu}-(u-\varphi)V_{\nu}\bigr]d\sigma.
\end{align*}
Finally, by Lemma~\ref{lemma_tracefree}, we conclude that $\mathring{\nabla}^{2}V=0$, and hence the last integral term in Proposition~\ref{intlemma} vanishes. Therefore, since $\Delta P = 2|\mathring{\nabla}^{2}u|^{2}$, the desired identity follows.

\end{proof}
\begin{remark}\label{existence}
In the setting of Theorem \ref{MainTh_2}, it follows from \eqref{Lapl_V} that
\[
\Delta V=-nkV.
\]
Therefore, if $V$ does not change sign, the problem \eqref{Dirichlet2} can be rewritten as
\begin{equation}\label{Dirichlet*}
\Delta u -\frac{\Delta V}{V} u = -1 \quad \text{in } \Omega, 
\qquad 
u = 0 \quad \text{on } \partial \Omega.
\end{equation}
The existence and uniqueness of solutions to \eqref{Dirichlet*} are guaranteed, for example, by \cite[Theorem 2.3]{foga}; see also \cite[Lemma 2.5]{li-xia}.
\end{remark}
In the next result, we show that, in conjunction with the overdetermined boundary condition \(u_{\nu}=-c\), the previous theorem immediately yields a rigidity statement. Although this result was proved via the \(P\)-function method in \cite[Corollary 1]{andrade2025} (see also \cite[Theorem 2]{freitas2024} for the closed conformal case), our approach follows directly from the integral identity established above.

\begin{corollary}\label{cor:rigidity}
Let \( (M^n, g) \) be an Einstein manifold with $\operatorname{Ric} = (n-1)k g,$ for some \( k \in \mathbb{R} \). Assume additionally that \( M \) is endowed with a conformal vector field \( X \) whose conformal factor does not change sign. If \( u \) is a solution of the overdetermined problem
\[
\begin{cases}
\Delta u + nku = -1 & \text{in } \Omega,\\
u>0 & \text{in } \Omega,\\
u=0 & \text{on } \partial\Omega,\\
u_{\nu}=-c & \text{on } \partial\Omega,
\end{cases}
\]
for some bounded domain \( \Omega\subset M \) with \(C^{2}\)-boundary and some constant \(c>0\), then \( \Omega \) is a metric ball and \( u \) is radial.
\end{corollary}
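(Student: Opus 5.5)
Since $u_\nu=-c$ is constant on $\partial\Omega$, I would apply Theorem~\ref{MainTh_2} with the constant in that identity chosen equal to the Neumann constant $c$. Then $u_\nu^2-c^2\equiv 0$ on $\partial\Omega$, the boundary integral vanishes for any fundamental solution $\varphi$, and we obtain
\[
\int_{\Omega}Vu\,|\mathring{\nabla}^{2}u|^{2}\,dv=0 .
\]
The choice of $\varphi$ does not matter here. I only need one fundamental solution on $\Omega$ for the theorem to apply. In the Einstein setting with a conformal field, I would take $\varphi$ affine in $V$, namely $\varphi=c_0V-\frac{1}{nk}$ for $k\neq 0$, mimicking Example~\ref{ex:space_forms_phi}. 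For $k=0$ I would use a suitable analogue, and if needed I would simply observe that the proof of Theorem~\ref{MainTh_2} works verbatim for that $\varphi$. Indeed, by Lemma~\ref{lemma_tracefree} and \eqref{Lapl_V} we have $\nabla^2V=-kVg$, so $\nabla^2\varphi=-c_0kVg=-(\tfrac1n+k\varphi)g$, as required.

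Next I would use the sign information. By hypothesis $u>0$ in $\Omega$, and $V$ does not change sign; replacing $X$ by $-X$ if necessary, we may assume $V\ge 0$. The integrand $Vu|\mathring\nabla^2u|^2$ is then nonnegative, so it vanishes identically. To conclude $\mathring\nabla^2u\equiv 0$ on $\Omega$, I need $V$ to vanish on no open set. This follows because $V$ satisfies $\nabla^2V=-kVg$, a second-order linear ODE along geodesics. If $V$ and $\nabla V$ vanished at a point, then $V\equiv 0$ on the connected domain. In that case $X$ would be a Killing field, the situation I regard as excluded as trivial and consistent with the paper's use of a nontrivial conformal factor. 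Otherwise the zero set of $V$ has empty interior. By continuity, $\mathring\nabla^2u=0$ on all of $\Omega$.

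Finally, $\mathring\nabla^2 u=0$ together with $\Delta u=-(1+nku)$ gives $\nabla^2u=-(\tfrac1n+ku)g$. Moreover, in the Einstein case $\Delta P=2|\mathring\nabla^2u|^2=0$. Lemma~\ref{farina-roncoroni} (equivalently \cite[Lemma 6]{farina2022}) then yields that $\Omega$ is a metric ball and $u$ is radial.

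I expect the main obstacle to be the middle step: ensuring $V$ is not identically zero on $\Omega$ and that its zero set is thin, and confirming that a fundamental solution $\varphi$ exists on $\Omega$, particularly when $k=0$. Everything else is immediate from Theorem~\ref{MainTh_2}.
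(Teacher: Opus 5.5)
Your proposal follows the paper's proof exactly: apply Theorem~\ref{MainTh_2} with the constant equal to the Neumann datum, use $u>0$ and the sign of $V$ to get $\mathring{\nabla}^{2}u\equiv 0$, and conclude via \cite[Lemma 6]{farina2022}. Your extra checks are correct and fill in points the paper leaves implicit: for $k\neq 0$, $c_0V-\frac{1}{nk}$ (even the constant $-\frac{1}{nk}$) is a fundamental solution, and a one-signed $V\not\equiv 0$ cannot vanish on an open set.

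The paper does not address the existence of $\varphi$ for $k=0$ either; it simply takes a fundamental solution as given in Theorem~\ref{MainTh_2}. So on that point your argument is exactly as complete as the paper's. For reference, when $\nabla V\neq 0$ one can take $\varphi=\frac{\langle X,\nabla V\rangle-V^{2}}{n|\nabla V|^{2}}$, but for homothetic $X$ such a $\varphi$ need not exist.
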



\begin{proof}
By Theorem~\ref{MainTh_2}, we have
\[
-2\int_{\Omega}Vu\,|\mathring{\nabla}^{2}u|^{2}dv=0.
\]
Since \(V\) does not change sign and \(u>0\) in \(\Omega\), it follows that $\mathring{\nabla}^{2}u=0$ in $\Omega$. The conclusion then follows directly from \cite[Lemma 6]{farina2022}.
\end{proof}


\section*{Data availability statement}
This manuscript has no associated data.

\section*{Conflict of interest statement}
On behalf of all authors, the corresponding author states that there is no conflict of interest.

\section*{Acknowledgments}
M. Andrade was partially supported by the Brazilian National Council for Scientific and Technological Development (CNPq, grants 408834/2023-4, 403869/2024-2, and 400078/2025-2) and FAPITEC/SE/Brazil (grant 019203.01303/2024-1). She also extends her gratitude to the Department of Mathematics at Princeton University, where this work was completed while she was a Visiting Fellow, for its hospitality. She is especially thankful to Ana Menezes for her warm hospitality and continuous encouragement.

A. Freitas was partially supported by the Brazilian National Council for Scientific and Technological Development (CNPq), grants 308141/2025-3 and 406078/2025-4. He also wishes to thank IMECC/UNICAMP for the stimulating scientific environment and support provided during the development of part of this work. In particular, he is grateful to Lino Grama for his warm hospitality and constant encouragement.

The authors would like to thank Márcio Santos for his discussions on the subject of this paper and valuable suggestions.

\end{document}